\newcommand{\abs}[1]{\left\lvert#1\right\rvert}
\newtheorem{theorem}{Theorem}[section]
\newtheorem{lemma}[theorem]{Lemma}
\newtheorem{openproblem}[theorem]{Open Problem}
\newtheorem{conjecture}{Conjecture}[section]
\def\smallskip{\addvspace{\smallskipamount}}
\def\medskip{\addvspace{\medskipamount}}
\def\bigskip{\addvspace{\bigskipamount}}
\def\makefootline{\baselineskip=24pt \line{\the\footline}}
\def\pagecontents{\ifvoid\topins\else\unvbox\topins\fi
   \dimen@=\dp255 \unvbox255
   \ifvoid\footins\else
      \vskip\skip\footins \footnoterule \unvbox\footins\fi
     \ifr@ggedbottom \kern-\dimen@ \vfil \fi}
\def\footnoterule{\kern-3pt\hrule width 2truein \kern 2.6pt}
\begin{document}

\title{{Complex Dynamics of the Difference Equation $\displaystyle z_{n+1}=\frac{\alpha}{z_{n}}+ \frac{\beta}{z_{n-1}}$ }}

\author{Sk. Sarif Hassan \thanks{Corresponding author},  Pallab Basu\\
  \small {International Centre for Theoretical Sciences}\\
  \small {Tata Institute of Fundamental Research}\\
  \small {Bangalore $560012$, India}\\
  \small Email: {\texttt{sarif.hassan@icts.res.in, pallab.basu@icts.res.in}}\\
}

\maketitle
\begin{abstract}
\noindent The dynamics of the second order rational difference equation in the title with complex parameters and arbitrary complex initial conditions is investigated. Two associated difference equations are also studied. The solutions in the complex plane of such equations exhibit many rich and complicated asymptotic belabour. The analysis of the local stability of these three difference equations and periodicity have been carried out. We further exhibit several interesting characteristics of the solutions of this equation, using computations, which does not arise when we consider the same equation with positive real parameters and initial conditions. Many interesting observations led us to pose several open problems and conjectures of paramount importance regarding chaotic and higher order periodic solutions and global asymptotic convergence of such difference equations. It is our hope that these observations of these complex difference equations would certainly be  new add-ons to the present art of research in rational difference equations in understanding the behaviour in the complex domain.
\end{abstract}

\vfill
\begin{flushleft}\footnotesize
{Keywords: Difference equation, Local asymptotic stability, Chaos, Fractals and Periodicity. \\
{\bf Mathematics Subject Classification: 39A10, 39A11}}
\end{flushleft}

\section{Introduction and Preliminaries}

Consider the difference equation

\begin{equation}
z_{n+1}=\frac{\alpha}{z_{n}}+ \frac{\beta}{z_{n-1}},\qquad n=0,1,\ldots
\label{equation:total-equationA}
\end{equation}%
where the parameters $\alpha, ~\beta$ are complex numbers, and the initial conditions $%
z_{-1}$ and $z_{0}$ are arbitrary complex numbers.

\addvspace{\bigskipamount}

The same difference equation is studied when the parameters $\alpha$ and $\beta$ and initial conditions are non-negative real numbers, Eq.(\ref{equation:total-equationA}) was investigated in \cite{B-J} and \cite{C-I} where the global asymptotic stability of the positive equilibrium was proved for all the parameters. In this present article it is an attempt to understand the same in the complex plane.

\addvspace{\bigskipamount}

The set of initial conditions ${z_{-1}, ~z_{0}} \in \mathbb{C}$ for which the solution of Eq.(\ref{equation:total-equationA}) is well defined for all $n \geq 0$ is called the $\emph{good}$ set of initial conditions or the \emph{domain of definition}. It is the compliment of the $\emph{forbidden}$ set of Eq.(\ref{equation:total-equationA}) for which the solution is not well defined for some $n \geq 0$. See \cite{G-K-S} for the definition and further work on obtaining the forbidden set for the first order Riccati Difference equation. However, for the second and higher order rational difference equations, the lack of an explicit form for the solutions, makes it very challenging to obtain the good set. For Eq.(\ref{equation:total-equationA}) we pose the following open problem of paramount importance and difficulty.

\addvspace{\bigskipamount}
\begin{openproblem}
\label{openproblem:good set} Determine the $\emph{good}$  set of initial conditions for Eq.(\ref{equation:total-equationA}).

\end{openproblem}

\addvspace{\bigskipamount}
Our goal is to investigate the character of the solutions of Eq.(\ref{equation:total-equationA}) when the parameters are real or complex and the initial conditions are arbitrary complex numbers in the domain of definition. For the rest of the sequel we assume that the initial conditions are from the $\emph{good}$ set \cite{S-E}.

We now present some preliminary material which will be useful in our
investigation of Eq.(\ref{equation:total-equationA}).

\bigskip

Let $f:\mathbb{C}^{\mathcal{K}+1} \rightarrow \mathbb{C}$ be a continuous function,
where $\mathcal{K}$ is a non-negative integer and $\mathbb{C}$ is an
interval of complex numbers. Consider the difference equation
\begin{equation}
\label{equation:introduction}
z_{n+1} = f(z_{n}, z_{n-1}, \ldots, z_{n-\mathcal{K}}) \hspace{.25in}
, \hspace{.25in} n=0,1,\ldots
\end{equation}
with initial conditions $z_{-\mathcal{K}}, z_{-\mathcal{K}+1},
\ldots, z_{0} \in \mathbb{C}.$

\bigskip

We say that $\bar{y}$ is an {\it equilibrium point} of
Eq.(\ref{equation:total-equationA}) if
$$f(\bar{z}, \bar{z}, \ldots, \bar{z}) = \bar{z}.$$

\bigskip

We now impose the further restriction that the function $f(u_{0},
u_{1}, \ldots, u_{\mathcal{K}})$ be \mbox{continuously}
differentiable.

\bigskip

The \emph{linearized equation} of Eq.(\ref{equation:introduction})
about the equilibrium $\bar{z}$ is the linear difference equation

\begin{equation}
\label{equation:linearized-equation}
\displaystyle{
z_{n+1} = a_{0} z_{n} + a_{1}z_{n-1} + \cdots + a_{\mathcal{K}}
z_{n-\mathcal{K}} \hspace{.25in} , \hspace{.25in} n=0,1,\ldots
}
\end{equation}
where for each $i = 0, 1, \ldots, \mathcal{K}$
$$a_{i} = \frac{\partial f}{\partial u_{i}}(\bar{y}, \bar{y}, \ldots,
\bar{y}).$$
The \emph{characteristic equation} of
Eq.(\ref{equation:linearized-equation}) is the equation

\begin{equation}
\label{equation:characteristic-roots}
\lambda^{\mathcal{K}+1} - a_{0}\lambda^{\mathcal{K}} -
a_{1}\lambda^{\mathcal{K}-1} - \cdots - a_{\mathcal{K} - 1} \lambda -
a_{\mathcal{K}} = 0.
\end{equation}

\bigskip

The following result, called the Linearized Stability Theorem, is
useful in determining the local stability character of the equilibrium
$\bar{z}$ of Eq.(\ref{equation:introduction}), \cite{S-H}.

\bigskip

\noindent {\bf Theorem A (The Linearized Stability Theorem)} \newline
\emph{The following statements are true:}
\begin{enumerate}

\item \emph{
If every root of Eq.(\ref{equation:characteristic-roots}) has modulus less than one, then the equilibrium $\bar{z}$ of
Eq.(\ref{equation:introduction}) is locally asymptotically stable.
}

\item \emph{
If at least one of the roots of
Eq.(\ref{equation:characteristic-roots}) has modulus greater than one, then the equilibrium $\bar{z}$ of Eq.(\ref{equation:introduction}) is unstable.
}

\end{enumerate}

\bigskip

The equilibrium solution $\bar{z}$ of Eq.(\ref{equation:introduction}) is called \emph{hyperbolic} if no
root of Eq.(\ref{equation:characteristic-roots}) has modulus equal to one. If there exists a root of
Eq.(\ref{equation:characteristic-roots}) with modulus equal to one, then $\bar{z}$ is called \emph{non-hyperbolic}.

\bigskip

The equilibrium point $\bar{z}$ of Eq.(\ref{equation:introduction}) is called a \emph{sink} if every root
of Eq.(\ref{equation:characteristic-roots}) has modulus greater than one.

\bigskip

The equilibrium point $\bar{z}$ of Eq.(\ref{equation:introduction}) is called a \emph{saddle point
equilibrium} if it is hyperbolic, and if in addition, there exists a root of Eq.(\ref{equation:characteristic-roots}) with modulus
less than one and another root of
Eq.(\ref{equation:characteristic-roots}) with modulus greater
than one. In particular, if $\bar{z}$ is a saddle point equilibrium of
Eq.(\ref{equation:introduction}), then $\bar{z}$ is unstable.

\bigskip

The equilibrium point $\bar{z}$ of
Eq.(\ref{equation:introduction}) is called a \emph{repeller} if every root
of Eq.(\ref{equation:characteristic-roots}) has absolute value less
than one.

\addvspace{\bigskipamount}

The following theorems would be useful in determining the characteristics of zeros of the \emph{characteristic polynomial}.\\ \\

\noindent {\bf Theorem B (Enestrom-Kakeya Theorem)} \newline
\\
Consider a polynomial $P(z)=\sum_{j=0}^n a_j z^j$ of degree $n$ with real coefficients $a_0, a_1, \dots a_n$ such that $a_n \geq a_{n-1} \geq a_{n-1} \dots \geq a_0 > 0$, then $P(z)$ has all its zeros in for $\mid z\mid \leq 1$ \cite{J-L}. In literature \cite{P-V}, there are some extensions and generalizations of this theorem. Interestingly, in 1967, Govil and Rahaman \cite{G-R} extended it to a polynomial with complex coefficients by proving the following theorem.
\\ \\
\noindent {\bf Theorem C (Govil-Rahaman Theorem)} \newline
\\
Let $P(z)=\sum_{j=0}^n a_j z^j$ be a polynomial of degree $n$ with complex coefficients $a_0, a_1, \dots a_n$, there exists an $a > 0$ such that $$ \mid a_n \mid  \geq  a \mid a_{n-1} \mid \geq a^2 \mid a_{n-1} \mid \geq \dots \geq a^{n-1} \mid a_1 \mid \geq a^{n} \mid a_0 \mid$$ then the polynomial $P(z)$ has all its zeros in $\mid z \mid \leq \frac{1}{a} R_1$ where $R_1$ is the greatest positive root of the cubic trinomial $R^{n+1}-2R^n+1=0$. \\ \\

\section{Local Stability of the Equilibriums}

\label{section:positive-equilibrium} In this section we establish the local stability character of the equilibria of Eq.(\ref{equation:total-equationA}) when the parameters $\alpha$ and $\beta$ are considered to be complex numbers with the initial conditions are arbitrary complex numbers.

\addvspace{\bigskipamount}

\noindent The equilibrium points of Eq.(\ref{equation:total-equationA}) are
the solutions of the equation
\[
\bar{z}=\frac{\alpha}{\bar{z}}+ \frac{\beta}{\bar{z}}
\]
Eq.(\ref{equation:total-equationA}) has two equilibria points
$$ \bar{z}_{1,2} = \pm \sqrt{\alpha+\beta}.$$
The linearized equation of Eq.(\ref{equation:total-equationA}) with respect to the equilibrium $\bar{z}_{1, 2}$ is
\[
Z_{n+1} + \frac{ \alpha }{ \alpha+ \beta }Z_{n} + \frac{\beta}{ \alpha + \beta}Z_{n-1} = 0, \qquad n
= 0,1,\ldots,
\]
with associated characteristic equation
\begin{equation}
\lambda^{2} + \frac{\alpha}{\alpha+ \beta} \lambda +\frac{\beta}{\alpha+ \beta} = 0.
\end{equation}

\addvspace{\bigskipamount}

The following result gives the local asymptotic stability of the equilibrium $\bar{z}_{1,2}$.

\begin{lemma}
\label{Result:positive-local-stability1} The equilibriums $\bar{z}_{1,2}$ $= \pm \sqrt{\alpha+\beta}$ of Eq.(\ref{equation:total-equationA}) are locally asymptotically stable if and only if $a\abs{\frac{\beta}{\alpha+ \beta}} \leq \abs{\frac{\alpha}{\alpha+ \beta}} \leq \frac{1}{a} < \frac{1}{\phi}$ for some $a > 0$, $\phi$ denotes \emph{golden ratio}.
\end{lemma}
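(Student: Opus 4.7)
The plan is to apply the Linearized Stability Theorem (Theorem~A) together with the Govil--Rahaman Theorem (Theorem~C) to the characteristic polynomial
\[
P(\lambda) = \lambda^{2} + \frac{\alpha}{\alpha+\beta}\,\lambda + \frac{\beta}{\alpha+\beta}.
\]
By Theorem~A, local asymptotic stability of $\bar{z}_{1,2}$ is equivalent to both zeros of $P$ lying strictly inside the open unit disk, so the first step is to recast the stability question as a zero-location problem for $P$.

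Next, I would match the coefficients $a_{2} = 1$, $a_{1} = \alpha/(\alpha+\beta)$, $a_{0} = \beta/(\alpha+\beta)$ in Theorem~C with $n = 2$. The Govil--Rahaman hypothesis then reads: there exists $a > 0$ such that
\[
1 \;\geq\; a\left|\frac{\alpha}{\alpha+\beta}\right| \;\geq\; a^{2}\left|\frac{\beta}{\alpha+\beta}\right|,
\]
which rearranges to $a\,|\beta/(\alpha+\beta)| \leq |\alpha/(\alpha+\beta)| \leq 1/a$, i.e.\ the two leftmost inequalities of the lemma. Under this hypothesis, Theorem~C puts every zero of $P$ in $|\lambda| \leq R_{1}/a$, where $R_{1}$ is the greatest positive root of $R^{3} - 2R^{2} + 1 = 0$.

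The key computation is to identify $R_{1}$ explicitly. Since $R = 1$ is visibly a root, one factors $R^{3} - 2R^{2} + 1 = (R-1)(R^{2} - R - 1)$, whose remaining roots are $(1 \pm \sqrt{5})/2$. The largest positive root is therefore $R_{1} = \phi = (1+\sqrt{5})/2$, the golden ratio. For the Govil--Rahaman bound $|\lambda| \leq \phi/a$ to imply Theorem~A's strict condition $|\lambda| < 1$, one needs $\phi/a < 1$, i.e.\ $1/a < 1/\phi$. Assembling these three inequalities reproduces exactly the chain in the statement.

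The main obstacle I anticipate is the ``only if'' direction: Theorem~C is only a sufficient zero-localization result, so it does not by itself force instability when the displayed inequalities fail. To close this gap I would examine the quadratic $P(\lambda)$ directly via the Schur--Cohn/Jury criterion, or alternatively treat $a$ as optimally chosen (say $a = |\alpha/(\alpha+\beta)|^{-1}$) so that the two Govil--Rahaman inequalities become tight and the localization is sharp. Establishing this converse is where the real work lies; the factorization of $R^{3} - 2R^{2} + 1$ through $\phi$ and the corresponding threshold $1/a < 1/\phi$ should then match on both sides of the equivalence.
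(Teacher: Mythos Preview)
Your argument for the sufficiency direction is essentially identical to the paper's: both apply Theorem~C to the characteristic quadratic, factor $R^{3}-2R^{2}+1=(R-1)(R^{2}-R-1)$ to identify $R_{1}=\phi$, and then impose $\phi/a<1$ to force the Govil--Rahaman disk inside the unit disk, reproducing the chain of inequalities.

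Where you diverge is in flagging the ``only if'' direction as a genuine obstacle. You are right that Theorem~C is merely a sufficient localization result and cannot by itself yield necessity. The paper's own proof does \emph{not} address this gap either: it establishes only the ``if'' direction and then simply restates the lemma as a biconditional in the final sentence, with no converse argument. So your concern is well-founded, and in fact the paper's proof is no more complete on this point than your proposal.

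One caution about your proposed remedy: invoking the Schur--Cohn/Jury test on the explicit quadratic would give the \emph{exact} necessary and sufficient stability condition (namely $|\beta/(\alpha+\beta)|<1$ together with a trace-type inequality), which is a different region of parameter space than the Govil--Rahaman chain $a|\beta/(\alpha+\beta)|\le|\alpha/(\alpha+\beta)|\le 1/a<1/\phi$. The latter is strictly stronger in general, so Schur--Cohn would not recover the ``only if'' direction of the lemma as stated; rather, it would show that the lemma's biconditional is too strong. Your instinct that the converse is where ``the real work lies'' is correct, but the work would lead to a corrected statement rather than a proof of this one.
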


\addvspace{\bigskipamount}

\begin{proof}
From \textbf{Theorem C}, the all roots of the characteristic equation $(5)$
lie in the close ball $\abs{z} \leq \frac{1}{a} K_1$ where $K_1$ is the largest zeros of the cubic polynomial $K^3-2K^2+1$ such that for some $a>0$
\begin{equation}
 1 \geq a \abs{\frac{\alpha}{\alpha+ \beta}} \geq a^2 \abs{\frac{\beta}{\alpha+\beta}}
 \end{equation}
is satisfied.

In this case $$K_1=\frac{\sqrt{5}+1}{2}$$ which is known also as \emph{golden ratio}, $\phi$. Therefore the closed ball becomes
\begin{equation}
\abs{z} \leq \frac{\phi}{a}
\end{equation}
From \textbf{Theorem (A)} we have if every roots of the Eq. (5) has modulus less than one, then the equilibriums $\bar{z}_{1,2}$ are \emph{locally asymptotically stable}. Then from Eq. $(7)$, the radius of the ball $\abs{z} \leq \frac{\phi}{a}$ should be $\frac{\phi}{a} < 1$. In turns, it is $a > \phi$.\\
\noindent

Therefore, The equilibriums of the Eq. $(6)$  $\bar{z}_{1,2}$ $= \pm \sqrt{\alpha+\beta}$ of Eq.(\ref{equation:total-equationA}) are locally asymptotically stable if and only if $a\abs{\frac{\beta}{\alpha+ \beta}} \leq \abs{\frac{\alpha}{\alpha+ \beta}} \leq \frac{1}{a} < \frac{1}{\phi}$.\\
\noindent

\end{proof}

\begin{lemma}
\label{Result:positive-local-stability2} The equilibriums $\bar{z}_{1,2}$ $= \pm \sqrt{\alpha+\beta}$ of Eq.(\ref{equation:total-equationA}) are \emph{sink} if and only if $a\abs{\frac{\beta}{\alpha+ \beta}} \leq \abs{\frac{\alpha}{\alpha+ \beta}} \leq \frac{1}{a}$ for some $0<a<\phi$, $\phi$ denotes \emph{golden ratio}.
\end{lemma}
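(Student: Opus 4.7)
The plan is to run an argument parallel to Lemma \ref{Result:positive-local-stability1} for the characteristic polynomial
\[
P(\lambda)=\lambda^{2}+\frac{\alpha}{\alpha+\beta}\lambda+\frac{\beta}{\alpha+\beta}
\]
of equation (5), but to convert the Govil--Rahaman disk estimate into the opposite statement: every root of $P$ has modulus strictly greater than one, which by the paper's definition is precisely the sink condition.

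First, I would identify the inequality chain in the statement with the hypothesis of \textbf{Theorem C}. Writing $a_{2}=1$, $a_{1}=\alpha/(\alpha+\beta)$, $a_{0}=\beta/(\alpha+\beta)$, the Govil--Rahaman hypothesis $\abs{a_{2}}\ge a\abs{a_{1}}\ge a^{2}\abs{a_{0}}$ is equivalent to
\[
a\abs{\tfrac{\beta}{\alpha+\beta}}\le \abs{\tfrac{\alpha}{\alpha+\beta}}\le \tfrac{1}{a},
\]
exactly the chain displayed in the lemma. So this half of the statement simply repackages the Govil--Rahaman assumption.

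Since Theorem C only supplies an \emph{upper} bound on root moduli, I would then switch to the reciprocal polynomial $\widetilde{P}(\mu)=\mu^{2}P(1/\mu)=\frac{\beta}{\alpha+\beta}\mu^{2}+\frac{\alpha}{\alpha+\beta}\mu+1$. Its non-zero roots are the reciprocals of those of $P$, so every root $\lambda$ of $P$ satisfies $\abs{\lambda}>1$ if and only if every root $\mu$ of $\widetilde{P}$ satisfies $\abs{\mu}<1$. Applying Theorem C to $\widetilde{P}$ confines all its roots to $\abs{\mu}\le \phi/a$ with $\phi=(1+\sqrt{5})/2$, and requiring this disk to lie inside the open unit disk produces the complementary constraint $0<a<\phi$, matching the constraint of Lemma \ref{Result:positive-local-stability1}. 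Invoking \textbf{Theorem A}(2) then confirms the equilibrium is unstable in the strong sense required by the paper's sink definition.

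The most delicate step will be the necessity direction. Theorem C provides only a sufficient condition for all roots to lie inside a prescribed disk, so the converse statement---that every sink admits an $a\in(0,\phi)$ satisfying the inequality chain---requires an extra ingredient. I would bridge this gap via Vi\`ete's formulas for the quadratic $\widetilde{P}$: the product and sum of its two roots directly relate $\abs{\alpha/(\alpha+\beta)}$ and $\abs{\beta/(\alpha+\beta)}$ to the reciprocal roots, and a two-sided modulus bound $\abs{\mu_{1}},\abs{\mu_{2}}<1$ should translate into the two-sided inequality on $a$ appearing in the conclusion. This extraction of a valid parameter $a$ from the sink hypothesis is the principal obstacle, and it is the same subtlety that silently underlies the ``only if'' half of Lemma \ref{Result:positive-local-stability1}.
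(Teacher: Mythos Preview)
Your approach differs from the paper's, which does not pass to the reciprocal polynomial at all: it simply reuses the disk $\abs{z}\le\phi/a$ from Eq.~(7) (Theorem~C applied directly to $P$) and declares that for a sink one needs the radius $\phi/a>1$, i.e.\ $a<\phi$. That argument is itself a non sequitur---an upper bound exceeding $1$ on the root moduli does not force the roots themselves to exceed $1$---so your instinct to seek a genuine lower bound via $\widetilde P$ is sound in principle. Unfortunately the execution breaks in two places.

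First, the chain $a\abs{\beta/(\alpha+\beta)}\le\abs{\alpha/(\alpha+\beta)}\le 1/a$ is the Govil--Rahaman hypothesis for $P$, not for $\widetilde P$. The coefficients of $\widetilde P(\mu)=\tfrac{\beta}{\alpha+\beta}\mu^{2}+\tfrac{\alpha}{\alpha+\beta}\mu+1$ are reversed, so Theorem~C applied there demands $\abs{\beta/(\alpha+\beta)}\ge a\abs{\alpha/(\alpha+\beta)}\ge a^{2}$, a different inequality; you cannot carry the lemma's hypothesis over to $\widetilde P$ as written. Second, even granting a bound $\abs{\mu}\le\phi/a$ for the roots of $\widetilde P$, forcing this disk inside the open unit disk means $\phi/a<1$, hence $a>\phi$---the \emph{same} constraint as in Lemma~\ref{Result:positive-local-stability1}, not its complement $0<a<\phi$. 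Thus the reciprocal-polynomial route, carried out correctly, yields a condition disjoint from the one stated in the lemma rather than establishing it.
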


\begin{proof}
From the \textbf{Theorem (A)}, we have if every roots of the Eq. (5) has modulus greater than one, then the equilibriums $\bar{z}_{1,2}$ are \emph{sink}.
Then from Eq. $(7)$, the radius of the ball $\abs{z} \leq \frac{\phi}{a}$ should be $\frac{\phi}{a} > 1$. In turns, it is $a < \phi$.\\
\noindent
Therefore, The equilibriums $\bar{z}_{1,2}$ $= \pm \sqrt{\alpha+\beta}$ of Eq.(\ref{equation:total-equationA}) are sink if and only if $a\abs{\frac{\beta}{\alpha+ \beta}} \leq \abs{\frac{\alpha}{\alpha+ \beta}} \leq \frac{1}{a}$ for some $0<a<\phi$.

\end{proof}

\begin{lemma}
\label{Result:positive-local-stability3} The equilibriums $\bar{z}_{1,2}$ $= \pm \sqrt{\alpha+\beta}$ of Eq.(\ref{equation:total-equationA}) are \emph{hyperbolic} if and only if $\abs{\frac{\frac{-\alpha}{\alpha+\beta}\pm \sqrt{(\frac{-\alpha}{\alpha+\beta})^2-4\frac{\beta}{\alpha+\beta}}}{2}} \neq 1 $.
\end{lemma}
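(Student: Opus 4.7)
The plan is to invoke the quadratic formula on the characteristic equation (5) and then apply verbatim the definition of \emph{hyperbolic} stated in the preliminaries.

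First, I would observe that the characteristic equation
\[
\lambda^{2} + \frac{\alpha}{\alpha+ \beta}\,\lambda + \frac{\beta}{\alpha+ \beta} = 0
\]
is quadratic in $\lambda$ with discriminant $\bigl(\tfrac{\alpha}{\alpha+\beta}\bigr)^{2} - 4\tfrac{\beta}{\alpha+\beta}$. Applying the quadratic formula (valid over $\mathbb{C}$ once a branch of the complex square root is fixed) yields the two roots
\[
\lambda_{\pm} \;=\; \frac{\dfrac{-\alpha}{\alpha+\beta} \pm \sqrt{\Bigl(\dfrac{-\alpha}{\alpha+\beta}\Bigr)^{2} - 4\dfrac{\beta}{\alpha+\beta}}}{2},
\]
which is exactly the expression appearing inside the modulus in the statement of the lemma.

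Next, I would recall from the preliminary material that the equilibrium $\bar{z}$ of Eq.(\ref{equation:total-equationA}) is called \emph{hyperbolic} precisely when no root of the characteristic equation has modulus equal to one. Since (5) has exactly two roots $\lambda_{+}$ and $\lambda_{-}$, this is equivalent to the single condition
\[
\left|\lambda_{\pm}\right| \neq 1,
\]
where the $\pm$ ranges over both choices of sign. Substituting the explicit quadratic-formula expression for $\lambda_{\pm}$ gives the inequality displayed in the lemma, and the converse is immediate by the same definition.

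Since both equilibria $\bar{z}_{1}$ and $\bar{z}_{2}$ share the same linearization and hence the same characteristic equation (5), the criterion applies simultaneously to $\bar{z}_{1}$ and $\bar{z}_{2}$. There is no genuine analytical obstacle here: the argument is purely definitional once the roots are written out, and the only point requiring a little care is to note that the leading minus sign inside the formula is the $-b/2a$ contribution from the quadratic formula, while the separate $\pm$ distinguishes the two roots over which the modulus condition is quantified.
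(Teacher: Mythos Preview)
Your proposal is correct and follows essentially the same approach as the paper: write out the two roots $\lambda_{\pm}$ of the characteristic equation via the quadratic formula and then invoke verbatim the definition of a \emph{hyperbolic} equilibrium. The paper's proof additionally remarks on the saddle subcase and notes that $\alpha=0$ or $\beta=0$ forces $|\lambda_{+}|=1$ (hence non-hyperbolicity), but these are side observations rather than part of the logical argument for the lemma itself.
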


\begin{proof}
 The two roots $\lambda_{\pm}$ (say) of the Eq. $(6)$ are $$ \lambda_{\pm}(\alpha, \beta)=\frac{\frac{-\alpha}{\alpha+\beta}\pm \sqrt{(\frac{-\alpha}{\alpha+\beta})^2-4\frac{\beta}{\alpha+\beta}}}{2} $$ From the \textbf{Theorem (A)},the equilibriums $\bar{z}_{1,2}$ are \emph{hyperbolic} if the modulus of the roots of the Eq. $(6)$ are not equal to $1$. If there exists a root of Eq. $(6)$ with modulus equal to $1$S, then the equilibrium is called \emph{non-hyperbolic}. So, if the modulus of $\lambda_{\pm}$ should be non equal to 1 then the equilibriums $\bar{z}_{1,2}$ are \emph{hyperbolic}. That is, $\abs{\lambda_{\pm}} \neq 1$. \\ In addition to the hyperbolicity of the equilibriums, if $\abs{\lambda_{+}} > 1$ and $\abs{\lambda_{-}} < 1$ then the equilibriums $\bar{z}_{1,2}$ are \emph{saddle}.\\

If any one of the parameters $\alpha$ or $\beta = 0$, then $\abs{\lambda_{+}}=1$ then the equilibriums $\bar{z}_{1,2}$ are \emph{non-hyperbolic}.

\end{proof}

\noindent
In different specific cases of the parameters $\alpha$ and $\beta$ of the Eq. $(1)$, the stability of the equilibriums $\sqrt{\alpha+\beta}$ are characterized through the characteristic equation Eq. $(5)$ about the equilibriums of the Eq. $(1)$ are described in the Table [1],

\begin{table}[h]
\small
\centering
\begin{tabular}{c c c c}
\hline
\centering   \textbf{Parameters}  &   \textbf{Characteristic Equation} &   \textbf{Modulus of the Zeros} &   \textbf{Inference}  \\
\hline\\
\centering $\alpha=\beta$ & $\lambda^2+\frac{1}{2}\lambda+\frac{1}{2}=0$ & $\mid\lambda_{\pm}\mid>1$ & $\pm \sqrt{2\alpha}$ are \emph{sinks}  \\\\
\hline\\
\centering $\alpha=i\beta$ & $\lambda^2+\frac{i}{1+i}\lambda+\frac{1}{1+i}=0$ &  $\mid\lambda_{\pm}\mid<1$ & $\pm\sqrt{\beta+i\beta}$ are \emph{l.a.s} \\\\
\hline\\
\centering $\alpha=-i\beta$ & $\lambda^2-\frac{i}{1-i}\lambda+\frac{1}{1-i}=0$ &  $\mid\lambda_{\pm}\mid>1$ & $\pm \sqrt{2\alpha}$ are \emph{sinks}  \\\\
\hline\\
\centering $\alpha=0$ \& $\beta \neq 0$ & $\lambda^2+1=0$ & $\mid\lambda_{\pm}\mid=1$ & $\pm \sqrt{\beta}$ are \emph{non-hyp} \\\\
\hline\\
\centering $\alpha \neq 0$ \& $\beta=0$ & $\lambda^2+\lambda=0$ &  $\mid\lambda_{\pm}\mid=0,1$ & $\pm \sqrt{\alpha}$ are \emph{non-hyp} \\\\
\hline\\

\end{tabular}
\caption{Local stability of the equilibriums of Eq. $(1)$}
\label{Table:}
\end{table}

\section{Dynamics of Associated Difference Equations}
Here we consider another two associated difference equations which are defined as follows
 \begin{equation}
 z_{n+1}=\alpha+\frac{z_n}{z_{n-1}}\beta \qquad n=0,1,\ldots
 \end{equation}

 \begin{equation}
 z_{n+1}=\frac{z_{n-1}}{z_n}\alpha+\beta \qquad n=0,1,\ldots
\end{equation}
\noindent
where the parameters $\alpha, ~\beta$ are complex numbers, and the initial conditions $z_{-1}$ and $z_{0}$ are arbitrary complex numbers.\\
\noindent
These two difference equations Eq. $(8)$ and Eq. $(9)$ are derived from Eq. $(1)$ just by multiplication of $z_n$ and $z_{n-1}$ with the function argument respectively .

\noindent
It is natural to pose a similar open problem as stated before for the Eq. $(1)$.

\begin{openproblem}
Determine the $\emph{good}$  set of initial conditions for Eq.$(8)$ and Eq. $(9)$.
\end{openproblem}
\noindent
Let us now investigate the local stability of the equilibriums of these two difference equation in the following section.

\subsection{Local Stability of the Equilibriums}
In this section we describe the local stability character of the equilibria of Eq.$(8)$ and Eq.$(9)$ when the parameters $\alpha$ and $\beta$ are considered to be complex numbers with the initial conditions are arbitrary complex numbers.
\\
The equilibrium are for the difference equations Eq.$(8)$ and Eq.$(9)$ is $\alpha+\beta$. Let us linearize the Eq.(8) and Eq.(9) about the equilibrium $\alpha+\beta$ as follows. The characteristic equations associated to the linear equations corresponding to Eq.$(8)$ and Eq.$(9)$ are
\begin{equation}
\lambda^{2} - \frac{\beta}{\alpha+ \beta} \lambda +\frac{\beta}{\alpha+ \beta} = 0.
\end{equation}

\begin{equation}
\lambda^{2} + \frac{\alpha}{\alpha+ \beta} \lambda -\frac{\alpha}{\alpha+ \beta} = 0.
\end{equation}

\noindent
In different specific cases of the parameters $\alpha$ and $\beta$ of the Eq. $(8)$, the stability of the equilibriums $\alpha+\beta$ are characterized through the characteristic equation Eq. $(10)$ about the equilibriums of the Eq. $(8)$ are described in the Table [2].

\begin{table}[H]
\small
\centering
\begin{tabular}{c c c c}
\hline
\centering   \textbf{Parameters}  &   \textbf{Characteristic Equation} &   \textbf{Modulus of the Zeros} &   \textbf{Inference}  \\
\hline\\
\centering $\alpha=\beta$ & $\lambda^2-\frac{1}{2}\lambda+\frac{1}{2}=0$ & $\mid\lambda_{\pm}\mid<1$ & $2\alpha$ is \emph{l.a.s}  \\\\
\hline\\
\centering $\alpha=i\beta$ & $\lambda^2-\frac{1}{1+i}\lambda+\frac{1}{1+i}=0$ &  $\mid\lambda_{\pm}\mid>1$ & $\beta+i\beta$ is \emph{sinks} \\\\
\hline\\
\centering $\alpha=-i\beta$ & $\lambda^2-\frac{1}{1-i}\lambda+\frac{1}{1-i}=0$ &  $\mid\lambda_{\pm}\mid>1$ & $\beta-i\beta$ is \emph{sinks}  \\\\
\hline\\
\centering $\alpha=0$ \& $\beta \neq 0$ & $\lambda^2-\lambda+1=0$ & $\mid\lambda_{\pm}\mid=1$ & $\beta$ are \emph{non-hyp} \\\\
\hline\\
\centering $\alpha \neq 0$ \& $\beta=0$ & $\lambda^2=0$ &  $\mid\lambda_{\pm}\mid=0$ & $\alpha$ are \emph{repeller} \\\\
\hline\\

\end{tabular}
\caption{Local stability of the equilibriums of Eq. $(8)$}
\label{Table:}
\end{table}

\noindent
In different specific cases of the parameters $\alpha$ and $\beta$ of the Eq. $(9)$, the stability of the equilibriums $\alpha+\beta$ are characterized through the characteristic equation Eq. $(11)$ about the equilibriums of the Eq. $(9)$ are described in the Table [3].

\begin{table}[H]
\small
\centering
\begin{tabular}{c c c c}
\hline
\centering   \textbf{Parameters}  &   \textbf{Characteristic Equation} &   \textbf{Modulus of the Zeros} &   \textbf{Inference}  \\
\hline\\
\centering $\alpha=\beta$ & $\lambda^2+\frac{1}{2}\lambda-\frac{1}{2}=0$ & $\mid\lambda_{+}\mid=1$, $\mid\lambda_{-}\mid=\frac{1}{2}$ & $2\alpha$ is \emph{non-hyp}  \\\\
\hline\\
\centering $\alpha=i\beta$ & $\lambda^2+\frac{i}{1+i}\lambda-\frac{i}{1+i}=0$ &  $\mid\lambda_{\pm}\mid<1$ & $\beta+i\beta$ is \emph{l.a.s} \\\\
\hline\\
\centering $\alpha=-i\beta$ & $\lambda^2-\frac{i}{1-i}\lambda+\frac{i}{1-i}=0$ &  $\mid\lambda_{\pm}\mid=0$ & $\beta-i\beta$ is \emph{repeller}  \\\\
\hline\\
\centering $\alpha=0$ \& $\beta \neq 0$ & $\lambda^2=0$ & $\mid\lambda_{\pm}\mid=0$ & $\beta$ are \emph{repeller} \\\\
\hline\\
\centering $\alpha \neq 0$ \& $\beta=0$ & $\lambda^2+\lambda-1=0$ &  $\mid\lambda_{\pm}\mid>1$ & $\alpha$ are \emph{sinks} \\\\
\hline\\

\end{tabular}
\caption{Local stability of the equilibriums of Eq. $(9)$}
\label{Table:}
\end{table}


\section{Periodic of Solutions}

\label{section:periodicity}

In this section we discuss the global periodicity and the existence of solutions that converge to periodic solutions of Eq.$(1)$, Eq.$(8)$ and Eq.$(9)$.\\

A difference equation is said to be \emph{globally periodic} of period $t$ if $x_{n+t}=x_n$ for any given initial condition. The characterization of periodic rational difference equations (either with real or complex coefficients) of order k is a challenging area of current research. See \cite{Ca-L} and \cite{K-L} for several periodicity results of second and third order rational difference equations, respectively, with nonnegative real coefficients and arbitrary nonnegative real initial conditions.

\subsection{Periodic Solutions of period 2 and 4}
For the difference equation Eq.$(1)$, it is our firm conviction that for $\alpha=0$ or $\beta=0$, the all solutions are periodic and of period $4$ or $2$ respectively. Also, it is conjectured that all solutions of the difference equation Eq.$(1)$ with the parameter $\beta=0$ or $\alpha=0$ are convergent and convergent to a periodic point $\sqrt{2\alpha}$ or $\sqrt{2\beta}$ of period $2$ or $4$ respectively. \\

\noindent

Further we took an attempt to investigate the higher order periodic cycles. We found there there is no periodic solutions of period $3$ which is formally proved in the next section. Interestingly there are periodic solutions of higher periods viz. $2$,$4$,$5$,$6$,$7$,\dots.

\subsection{Higher Order Cycles in Solutions}

In investigating the solutions of higher order cycles of order $d$ of the Eq. $(1)$ the following system of nonlinear equations needs to be solved. The solutions are indeed the cycles.

\begin{equation}
z_{mod(n,d)}=\frac{\alpha}{z_{mod(n-1,d)}}+\frac{\beta}{z_{mod(n-2,d)}}
\end{equation}

\noindent
for $n=1,2,3,\dots,d$ and where $mod(n,d)$ denotes the $x$ such that $n \equiv x$ $mod(d)$.
\\ \\
\noindent
\textbf{Theorem D}: There does not exist any periodic solution of period $3$.\\
\noindent
Proof: The solutions of the equations $$\left\{z_0=\frac{\alpha }{z_2}+\frac{\beta }{z_1}, z_1=\frac{\alpha }{z_0}+\frac{\beta }{z_2},z_2=\frac{\alpha }{z_1}+\frac{\beta }{z_0}\right\}$$ are the periodic solutions of period $3$. Interestingly there is no solutions except the equilibrium $\sqrt{\alpha+\beta}$ of the equation Eq. $(1)$. Hence there is no solution of period $3$.
\\\\
\noindent
A list of difference periodic cycles of different length are adumbrated for the parameters $\alpha=\beta=1$ in the Table $4$.

\begin{table}[H]
\small
\begin{tabular}{| m{2.5cm} | m{1cm}| m{12.3cm} |}
\hline
\centering   \textbf{Parameters}  &   \textbf{CL} &   \textbf{One of the Solutions} \\
\hline
\centering $\alpha=1; \beta=1$ & $4$ & $0.765367 i,-1.84776 i,-0.765367 i,1.84776 i$   \\
\hline
\centering $\alpha=1; \beta=1$ & $5$ & $0.309721 i,-1.83083 i,-2.68251 i,0.918986 i,-0.71537 i$   \\
\hline
\centering $\alpha=1; \beta=1$ & $6$ & $0.53713 i,-0.735107 i,-0.501402 i,3.35475 i,1.69632 i,-0.887595 i$ \\
\hline
\centering $\alpha=1; \beta=1$ & $7$ & $0.563218 i,-1.45984 i,-1.09051 i,1.60201 i, 0.292791 i,-4.03963 i,-3.16786 i$   \\

\hline
\end{tabular}
\caption{Cycle solutions of different order for different choice of $\alpha$ and $\beta$ of Eq. $(1)$, Here CL denotes cycle length.}
\label{Table:}
\end{table}

\noindent
There are also many higher order cycles too. For $\alpha=(15, -88)$ and $\beta=(-53,-30)$ and the initial values $z_0=(65, -97)$ and $z_1=(-92,-67)$, we have encountered a solution of the difference equation Eq. ($1$) which converges to a periodic point of period $23$. The periodic cycles is $(18.574,-4.5796) \rightarrow (3.295, -1.1914)\rightarrow(-9.4511,	-17.474)\rightarrow(15.164, -23.792)\rightarrow(3.424, 0.61574)\rightarrow(-13.604, -7.0189)\rightarrow(3.742,	-25.505)\rightarrow(2.618, 3.3548)\rightarrow(-9.7582, 5.5619)\rightarrow(-11.358, -10.844)\rightarrow(-1.2796, 5.1965)\rightarrow(0.10378, 15.669)\rightarrow(-18.574, 4.5796)\rightarrow(-3.295, 1.1914)\rightarrow(9.4511, 17.474)\rightarrow(-15.164, 23.792)\rightarrow(-3.424, -0.61574)\rightarrow(13.604, 7.0189)\rightarrow(-3.742, 25.505)\rightarrow(-2.618, -3.3548)\rightarrow(9.7582, -5.5619)\rightarrow(11.358, 10.844)\rightarrow(1.2796, -5.1965)\rightarrow(-0.10378, -15.669)\rightarrow(18.574,-4.5796)$.
\noindent
The plot of the real and imaginary sequence is given the Fig. $(1)$.

\begin{figure}[H]
      \centering

      \resizebox{12cm}{!}
      {
      \begin{tabular}{c}
      \includegraphics [scale=2.0]{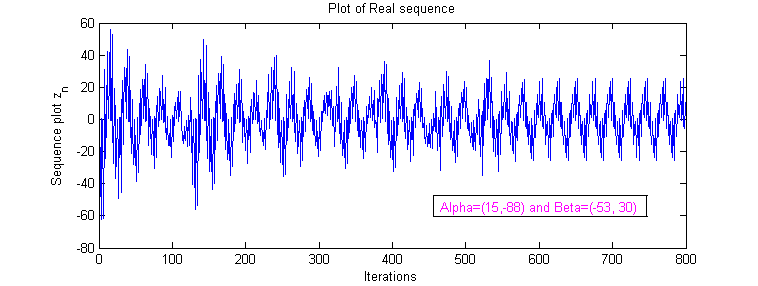}\\\\
      \includegraphics [scale=2.0]{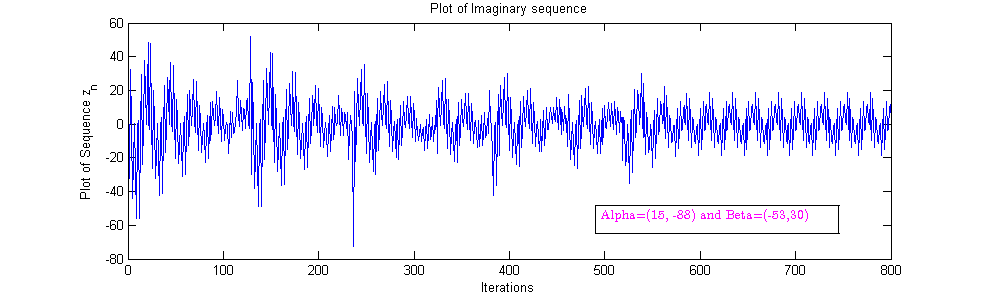}\\

      \end{tabular}
      }
\caption{Plot of Real and Imaginary Sequence for $\alpha=(15, -88)$ and $\beta=(-53,-30)$ and the initial values $z_0=(65, -97)$ and $z_1=(-92,-67)$.}
      \begin{center}

      \end{center}
      \end{figure}
\noindent
We also have listed different periodic cycles of the different length for the associated difference equations Eq. $(8)$ and Eq. $(9)$ in the Table $5$ and Table $6$.

\begin{table}[H]
\small
\begin{tabular}{| m{2.5cm} | m{1cm}| m{12cm} |}
\hline
\centering   \textbf{Parameters}  &   \textbf{CL} &   \textbf{One of the Solutions} \\
\hline
\centering $\alpha=1; \beta=1$ & $3$ & $1.24698, -1.80194, -0.445042$   \\
\hline
\centering $\alpha=1; \beta=1$ & $5$ & $0.83083,-1.91899,-1.30972,1.68251,-0.28463$   \\
\hline
\centering $\alpha=1; \beta=1$ & $6$ & $1.80194,-0.445042,1.24698,-1.80194,-0.445042,1.24698$   \\
\hline
\centering $\alpha=1; \beta=1$ & $7$ & $1.87278,-0.556474,0.702862,-0.263063,0.625725,-1.37861,-1.20322$ \\

\hline
\end{tabular}
\caption{Cycle solutions of different order for different choice of $\alpha$ and $\beta$ of Eq. $(8)$.}
\label{Table:}
\end{table}

\begin{table}[H]
\small
\begin{tabular}{| m{2.5cm} | m{1cm}| m{12cm} |}
\hline
\centering   \textbf{Parameters}  &   \textbf{CL} &   \textbf{One of the Solutions} \\
\hline
\centering $\alpha=1; \beta=1$ & $3$ & $1.24698,-0.445042,-1.80194$   \\
\hline
\centering $\alpha=1; \beta=1$ & $4$ & $1.53339-0.608009 i,1.81536+0.929423 i,1.53339-0.608009 i,1.81536+0.929423 i$   \\
\hline
\centering $\alpha=1; \beta=1$ & $5$ & $0.574313+0.798528 i,-0.273032-0.160806 i,-1.84063-1.25163 i,1.14206-0.00923437 i,-0.60271-1.1089 i$   \\
\hline
\centering $\alpha=1; \beta=1$ & $6$ & $2.61506,1.61917,2.61506,1.61917,2.61506,1.61917$ \\
\hline
\centering $\alpha=1; \beta=1$ & $7$ & $0.962688+0.453798 i,-0.251383-0.177869 i,-2.40312+0.602711 i,1.08095+0.0943187 i,-1.15807+0.745878 i,0.377343-0.482479 i,-1.12397-0.739107 i$
\\
\hline
\centering $\alpha=1; \beta=1$ & $8$ & $27.0466,1.03839,27.0466,1.03839,27.0466,1.03839,27.0466,1.03839$
\\

\hline
\end{tabular}
\caption{Cycle solutions of different order for different choice of $\alpha$ and $\beta$ of Eq. $(8)$.}
\label{Table:}
\end{table}

\noindent
\textbf{Theorem E}: There does not exist any periodic solution of period $4$.\\
\noindent
Proof: The solutions of the equations $$\left\{z_1=\alpha +\frac{\beta  z_0}{z_3},z_2=\alpha +\frac{\beta  z_1}{z_0},z_3=\alpha +\frac{\beta  z_2}{z_1},z_0=\alpha +\frac{\beta  z_3}{z_2}\right\}$$ are the periodic solutions of period $4$. There is no solutions except the equilibrium $\alpha+\beta$ of the equation Eq. $(8)$. Hence there is no solution of period $4$.\\ \\

\noindent
\textbf{Theorem F}: The 2-cycle of the difference equation Eq. $(1)$ are \emph{attracting}, \emph{repelling} and \emph{non-hyperbolic} if and only if the modulus of the eigenvalues of the matrix $$\left(
\begin{array}{cc}
 \frac{\alpha }{\beta -\alpha } & \frac{\beta }{\beta -\alpha } \\
 \frac{-\alpha ^2+\beta  \alpha +\beta ^2}{(\alpha -\beta )^2} & \frac{\alpha  \beta }{(\alpha -\beta )^2} \\
\end{array}
\right)$$ are lesser, greater than or equal to $1$ respectively.\\ \\
\noindent
Proof: The two cycles of the difference equation Eq. $(1)$ are the solutions of the nonlinear equations $$\left\{z_1=\frac{\alpha }{z_0}+\frac{\beta }{z_1},z_0=\frac{\alpha }{z_1}+\frac{\beta }{z_0}\right\}$$ Therefore the 2 cycles are $-\sqrt{\alpha-\beta}$ and $\sqrt{\alpha+\beta}$ which is actually fixed point of the Eq. $(1)$. So we are mainly interested in the other 2 cycle which is $-\sqrt{\alpha-\beta}$. \\

The characteristic equation of the linearized equation about the 2-cycle $-\sqrt{\alpha-\beta}$ of Eq. (1) is $$\left|
\begin{array}{cc}
 \frac{\alpha }{\beta -\alpha } & \frac{\beta }{\beta -\alpha } \\
 \frac{-\alpha ^2+\beta  \alpha +\beta ^2}{(\alpha -\beta )^2} & \frac{\alpha  \beta }{(\alpha -\beta )^2} \\
\end{array}\right|=0$$

The zeros of this characteristic equation, which are essentially the eigenvalues of the matrix $\left(
\begin{array}{cc}
 \frac{\alpha }{\beta -\alpha } & \frac{\beta }{\beta -\alpha } \\
 \frac{-\alpha ^2+\beta  \alpha +\beta ^2}{(\alpha -\beta )^2} & \frac{\alpha  \beta }{(\alpha -\beta )^2} \\
\end{array}
\right)$ are $$\left\{\frac{-\alpha ^2-\sqrt{\alpha ^4+4 \alpha ^3 \beta -8 \alpha ^2 \beta ^2+4 \beta ^4}+2 \alpha  \beta }{2 (\alpha -\beta )^2},\frac{-\alpha ^2+\sqrt{\alpha ^4+4 \alpha ^3 \beta -8 \alpha ^2 \beta ^2+4 \beta ^4}+2 \alpha  \beta }{2 (\alpha -\beta )^2}\right\}$$ The modulus of these two eigenvalues need to be lesser, greater or equal to $1$ in determining the $2$-cycle as \emph{attracting}, \emph{repelling} or \emph{non-hyperbolic}. Hence the theorem is proved.\\

\noindent
In the similar fashion, the local stability of the $2$-cycle of the difference equations Eq. $(8)$ and Eq. $(9)$ can also be analyzed easily which we left to reader. This result would encourage us to draw the following conjecture.
\begin{conjecture}
For every finite length (if exists) cycle of the difference equations Eq. $(1)$, Eq. $(8)$ and Eq. $(9)$, there exists at least one solution which converges to the cycle.
\end{conjecture}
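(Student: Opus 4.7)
The plan is to treat Eq.$(1)$, Eq.$(8)$ and Eq.$(9)$ uniformly by reformulating each as a planar map on $\mathbb{C}^2$: for Eq.$(1)$ we set $F(u,v) = (v,\, \alpha/v + \beta/u)$, with analogous maps for the other two equations. A period-$d$ cycle of the scalar recurrence corresponds to a period-$d$ orbit of $F$, so by replacing $F$ with its $d$-th iterate $F^{d}$ it suffices to prove that every fixed point of $F^{d}$ lying in the good set admits at least one non-constant orbit converging to it.

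I would then split into cases according to the spectral type of $DF^{d}$ at the cycle, in the spirit of Theorem A. If all eigenvalues have modulus strictly less than $1$ (attracting cycle), linearized stability produces an open neighborhood whose orbits all converge, giving continuum-many convergent solutions. If the cycle is hyperbolic of saddle type, the stable manifold theorem for holomorphic diffeomorphisms yields a positive-dimensional local stable manifold on which every orbit converges. These two subcases handle the generic hyperbolic situation and reduce the conjecture to repelling and non-hyperbolic cycles.

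For the repelling case I would exploit the birationality of $F$: solving $F(u,v)=(v,w)$ for $u$ gives $u = \beta v/(w v - \alpha)$ for Eq.$(1)$ and analogous formulas for Eq.$(8)$ and Eq.$(9)$, so $F$ admits a rational inverse off a curve. Starting from a cycle point $p$ and forming the iterated pre-images $F^{-k}(p)$ produces initial conditions whose forward orbits reach $p$ in exactly $k$ steps and are thereafter $d$-periodic; such sequences enter and remain in every neighborhood of the cycle, which is the natural notion of convergence to a set. What still needs checking is that at least one such pre-image lies off the cycle, which reduces to ruling out the degenerate scenario where $F^{-1}$ merely permutes the cycle points, and that the non-degeneracy $w v \ne \alpha$ holds along the backward orbit. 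Both issues should yield to direct algebra on the explicit cycle equations used in Theorem F.

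The main obstacle is the non-hyperbolic case, which includes the parameter values $\alpha=0$ and $\beta=0$ for Eq.$(1)$ where $|\lambda_\pm| = 1$. Here neither linearization nor the pre-image construction produces an orbit that truly asymptotically approaches the cycle, and one is forced into a center-manifold or normal-form analysis of the holomorphic map $F^{d}$, requiring case-by-case verification that the leading nonlinear terms are transverse to the neutral eigenspace. An additional conceptual difficulty is fixing what ``converges to the cycle'' means: allowing eventually-periodic orbits makes the pre-image argument essentially complete, whereas demanding genuine asymptotic approach forces the full machinery above. Clarifying this distinction, and then resolving the non-hyperbolic sub-case uniformly across all three equations, is in my view what makes the authors state this as a conjecture rather than a theorem.
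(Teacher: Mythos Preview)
The statement is posed in the paper as a \emph{conjecture}, not a theorem: the authors give no proof at all, only the remark that the $2$-cycle stability analysis of Theorem~F ``would encourage us to draw the following conjecture,'' supported elsewhere by numerical examples of orbits observed to converge to cycles. There is therefore no paper argument to compare against; your proposal is an attempt to settle something the authors explicitly leave open.

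On the substance of your outline: the attracting and saddle cases are standard and correct. The genuine gap is the repelling case. Your pre-image construction manufactures orbits that land \emph{exactly} on the cycle after finitely many steps; you note the ambiguity about whether that counts, but the paper's own usage (e.g.\ the period-$23$ example, described as a numerically computed orbit that ``converges to a periodic point'') makes clear the intended meaning is genuine asymptotic approach by an orbit not itself eventually on the cycle. Under that reading your pre-image argument supplies nothing, and more seriously the conjecture is almost certainly \emph{false}: a cycle with both multipliers of modulus strictly greater than one has empty stable set apart from itself and its pre-images, so no orbit can asymptotically approach it. Thus the repelling case is not merely a gap in your proof but a likely source of counterexamples, and any honest resolution must either reinterpret ``converges'' to allow eventually-periodic orbits (trivializing the statement) or restrict the conjecture to non-repelling cycles. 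Your proposal raises the interpretive question but does not confront this consequence; neither, of course, does the paper.
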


\section{Chaotic Solutions}
For the difference equations Eq. $(1)$, Eq. $(8)$ and Eq. $(9)$ we have found chaotic solutions. The chaotic property of the solutions can be ensured through the largest positive Lyapunav exponent of the solutions. Few examples are illustrated in the following table $(7)$ with corresponding plots of the solutions as shown in Fig. $(2)$. The green and blue plot are denoting real and imaginary sequences respectively.

\begin{table}[H]
\small
\begin{tabular}{| m{5cm} | m{5.3cm}| m{5.2cm} |}
\hline
\centering   $z_{n+1}=\frac{\alpha}{z_{n}}+\frac{\beta}{z_{n-1}}$ &   $z_{n+1}=\alpha+\frac{z_n}{z_{n-1}}\beta$ & $z_{n+1}=\frac{z_{n-1}}{z_n}\alpha+\beta$  \\
\hline
\centering $\alpha=(30,47)$, $\beta=(30,-10)$, $z_0=(9,-41)$, $z_1=(49. -63)$ & $\alpha=(56, -22)$, $\beta=(-52, -19)$, $z_0=(-81, -74)$, $z_1=(89, 92)$  & $\alpha=(4, -81)$, $\beta=(64, 64)$, $z_0=(45, -70)$, $z_1=(32, 4)$    \\
\hline

\end{tabular}
\caption{Cycle solutions of different order for different choice of $\alpha$ and $\beta$ of Eq. $(8)$ and initial values.}
\label{Table:}
\end{table}

\begin{figure}[H]
      \centering

      \resizebox{16cm}{!}
      {
      \begin{tabular}{c}
      \includegraphics [scale=3]{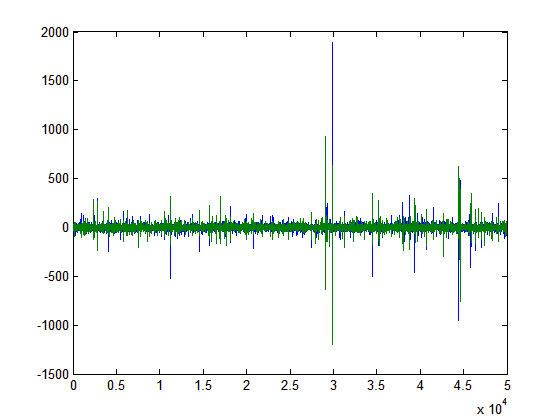}
      \includegraphics [scale=3]{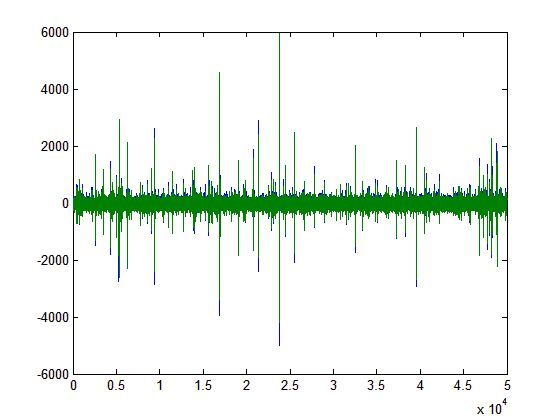}
      \includegraphics [scale=3]{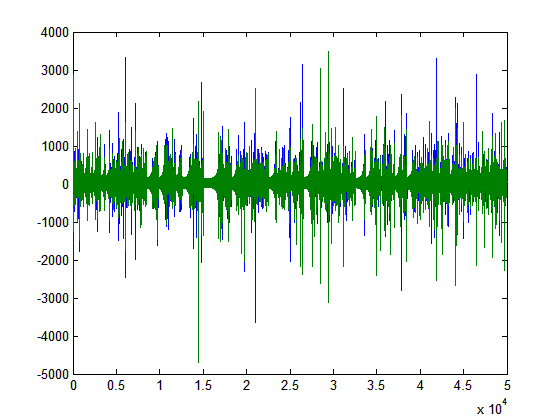}

      \end{tabular}
      }
\caption{Chaotic Solutions for the difference equations Eq. $(1)$ (Top), Eq. $(8)$ (Middle) and Eq. $(9)$ (Bottom).}
      \begin{center}

      \end{center}
      \end{figure}
\noindent
The Lyapunav exponent of the solutions of the three cases as stated in the table are $1.6015$, $1.2414$ and $0.6885$ respectively which declare that the solutions are indeed chaotic.

\begin{openproblem}
Find out the set of all possible initial values $z_0$ and $z_1$ for which the solution are chaotic for given parameters  $\alpha$ and $\beta$.
\end{openproblem}

\section{A Comparative View of Dynamics}
In this section, an attempt has been made to compare the dynamics of the three associated differences equations Eq. $(1)$, Eq $(8)$ and Eq $(9)$. The comparison of the limiting behaviour of the dynamics (sequence of iterates) and the state spaces over $50000$ iterations of the three difference equations are presented in the following two subsections through examples.
\subsection{Comparison of Characteristics of Dynamics}

Here we present a table of comparative dynamics among the three difference equations Eq. $(1)$, Eq $(8)$ and Eq $(9)$. It turns out that all these three difference equations are mostly having different characteristics for same parameters $\alpha$ and $\beta$ and with the initial values $z_0$ and $z_1$. The fixed point of the difference equation Eq. $(8)$ and Eq. $(9)$ is $\alpha+\beta$. We observed that given the fixed parameters $\alpha$ and $\beta$ and the intimal values, any solutions of Eq. $(8)$ which is convergent and converges to the the fixed point $\alpha+\beta$ fetch another solution which is also convergent and converges to the same fixed point of the other difference equation Eq. $(9)$.

\begin{table}[H]
\small
\centering
\begin{tabular}{| m{2.8cm} || m{4cm}|| m{4cm} | | m{4cm}|}
\hline
\centering   \textbf{Initial Values}  &   $z_{n+1}=\frac{\alpha}{z_{n}}+\frac{\beta}{z_{n-1}}$ &   $z_{n+1}=\alpha+\frac{z_n}{z_{n-1}}\beta$ & $z_{n+1}=\frac{z_{n-1}}{z_n}\alpha+\beta$  \\
\hline
\centering $\alpha=(30, 47)$, $\beta=(30, -10)$, $z_0=(9, -41)$, $z_1=(49, -63)$ & Chaotic  & Convergent and converges to $(60, 37)$ & Convergent and converges to $(60, 37)$  \\

\hline
\centering $\alpha=(56, -22)$, $\beta=(-52, -19)$, $z_0=(-81, -74)$, $z_1=(89, 92)$ & Converges to a periodic point $(10.393, -0.14432)$ of period $2$  & Chaotic & Converges to a periodic point $((33.8, 60.46)$ of period $2$   \\
\hline
\centering $\alpha=(4, -81)$, $\beta=(64, 64)$, $z_0=(45, -70)$, $z_1=(32, 4)$ & Converges to a periodic point $(6.9614, -10.414)$ of period $2$  & Divergent & Chaotic \\
\hline
\centering $\alpha=(15, -88)$, $\beta=(-53, -30)$, $z_0=(65, -97)$, $z_1=(-92, -67)$ & Periodic and of Period $23$  & Convergent and converges to $(-38, -118)$ & Convergent and converges to $(-38, -118)$ \\
\hline

\end{tabular}
\caption{Local stability of the equilibriums of Eq. $(9)$}
\label{Table:}
\end{table}

\noindent
The largest Lyapunav exponent of the solutions of the difference equations Eq. (1), Eq. (8) and Eq. (9) of the first three cases have already calculated in the previous section which ensure that the solutions are chaotic.

\subsection{Comparison of State Spaces of Dynamics}

In this section we have adumbrated a list of cases to compare state spaces over $50000$ iterations of the difference equations Eq. $(1)$, Eq. $(8)$ and Eq. $(9)$ in the Table $9$. It turns out that no state space is similar to others.
\begin{table}[H]
\small
\centering
\begin{tabular}{| m{2.8cm} || m{4cm}|| m{4cm} | | m{4cm}|}
\hline
\centering   \textbf{Initial Values}  &   $z_{n+1}=\frac{\alpha}{z_{n}}+\frac{\beta}{z_{n-1}}$ &   $z_{n+1}=\alpha+\frac{z_n}{z_{n-1}}\beta$ & $z_{n+1}=\frac{z_{n-1}}{z_n}\alpha+\beta$  \\
\hline
\centering $\alpha=(9, -73)$, $\beta=(-70, -49)$, $z_0=(52, 110)$, $z_1=(68, 88)$ & Convergent and converges to $(10.591, -5.759)$  & Unbounded  & Convergent and converges to $(79, -122)$  \\

\hline
\centering $\alpha=(100, -55)$, $\beta=(31, 21)$, $z_0=(-82, 160)$, $z_1=(-11, -94)$ & Unbounded  &  Convergent and converges to $(131, -34)$ &  Convergent and converges to $(131, -34)$\\
\hline
\centering $\alpha=(-29, 33)$, $\beta=(-44, -54)$, $z_0=(152, 122)$, $z_1=(87, -191)$ & Fractal & Unbounded & Convergent and converges to $(-73, -33)$ \\
\hline
\centering $\alpha=(58, 56)$, $\beta=(34, -74)$, $z_0=(-8, -59)$, $z_1=(-57, -91)$ & Unbounded  & Unbounded & Convergent and converges to a periodic point $(56.376, -118.56)$ \\
\hline

\centering $\alpha=(98, 1)$, $\beta=(-46, -80)$, $z_0=(-99, 130)$, $z_1=(55, 75)$ & Fractal  & Convergent and converges to a periodic point $(117.82, 14.575)$ of period $6$  & Convergent and converges to a periodic point $(-61.076, -143.49)$ of period $2$\\
\hline
\centering $\alpha=(-64, 0)$, $\beta=(4, 99)$, $z_0=(-89, 184)$, $z_1=(29, -32)$ & Unbounded  & Unbounded & Convergent and converges to $(-60, 99)$ \\
\hline

\centering $\alpha=(80, -87)$, $\beta=(-33, -100)$, $z_0=(87, 64)$, $z_1=(42, 49)$ & Fractal  & Chaotic & Convergent and converges to $(47, -187)$ \\
\hline

\centering $\alpha=(4, 55)$, $\beta=(-76, 25)$, $z_0=(-34, -32)$, $z_1=(64, 6)$ & Unbounded  & Unbounded & Convergent and converges to a periodic point $(19.794, 105.86)$ of period $2$\\
\hline

\end{tabular}
\caption{Characterizations of State space over $50000$ iterations of Eq. $(1)$, Eq. $(8)$ and Eq. $(9)$}
\label{Table:}
\end{table}
\noindent
The states spaces are figured corresponding to each of the above six cases in the Fig. (3). The states space is the set of all $z_i$ for all $i$ ranges from $1$ to $50000$.

\begin{figure}[H]
      \centering
      \resizebox{10cm}{!}
      {
      \begin{tabular}{c c c}
      \includegraphics [scale=2.0]{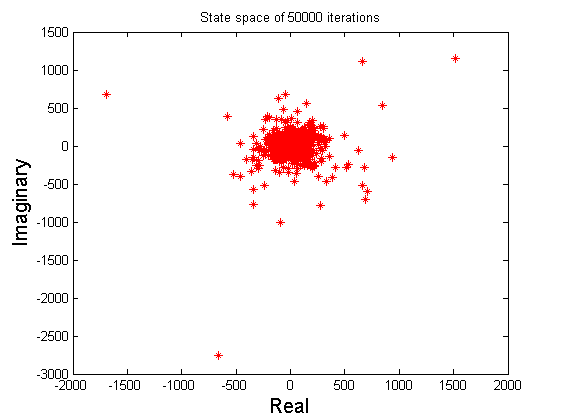}
      \includegraphics [scale=1.45]{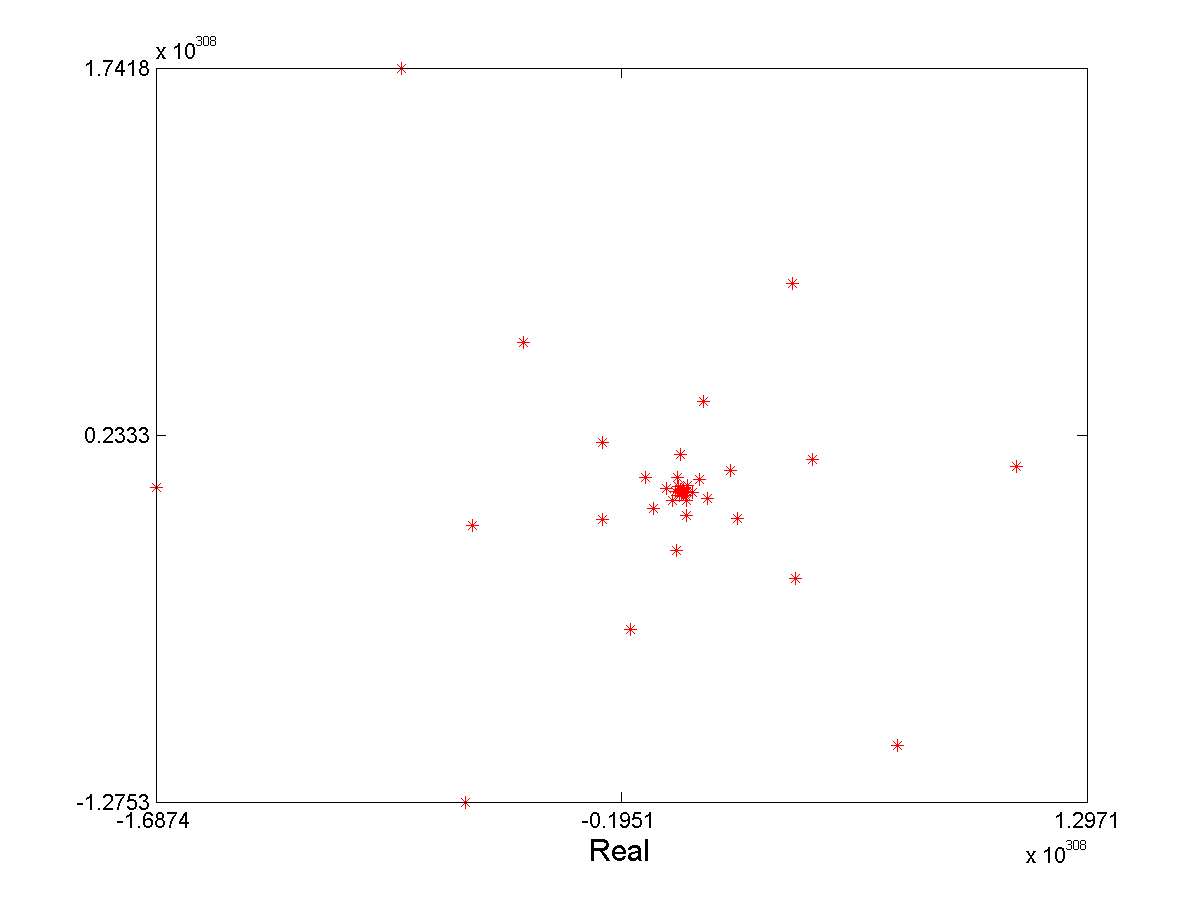}
      \includegraphics [scale=1.45]{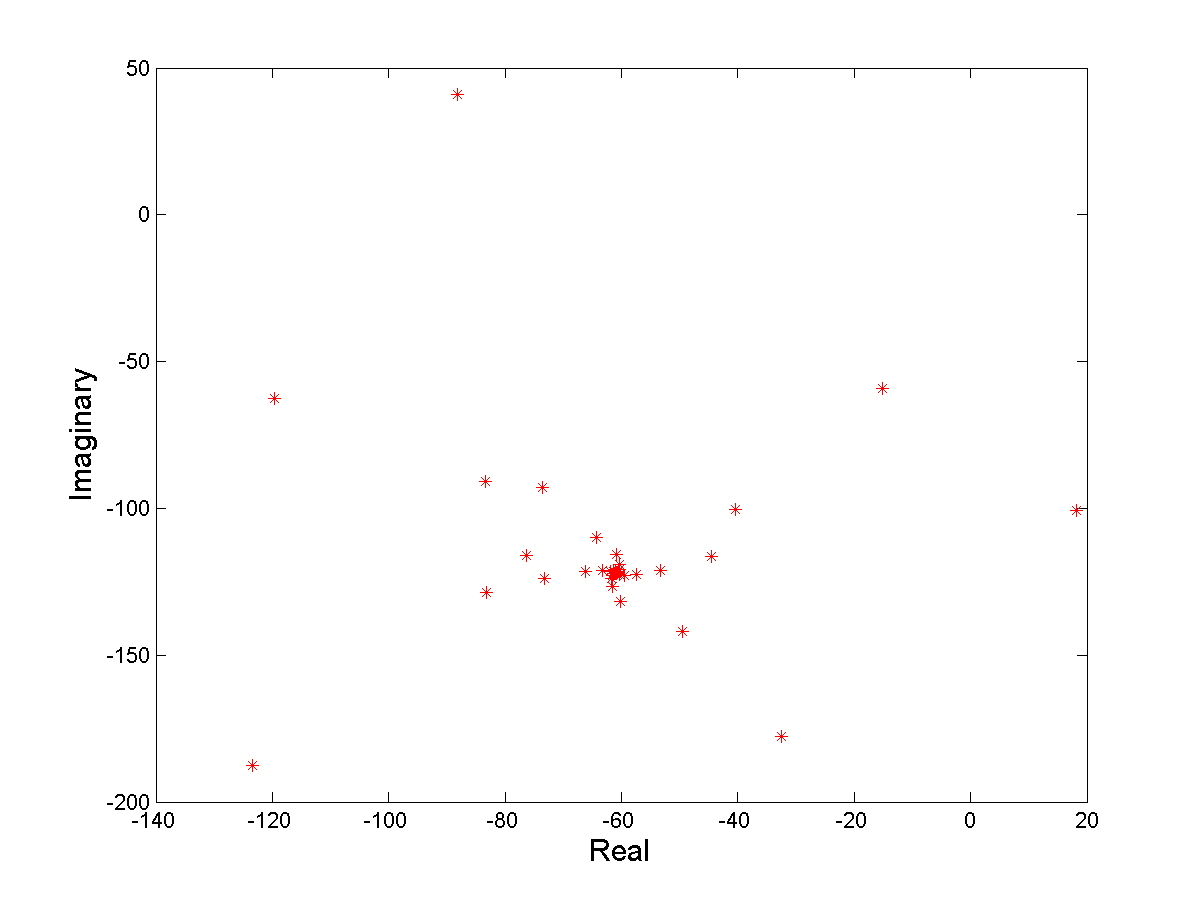}\\
      \includegraphics [scale=2.0]{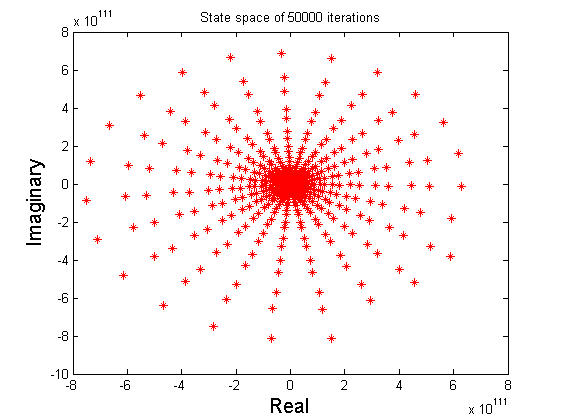}
      \includegraphics [scale=1.45]{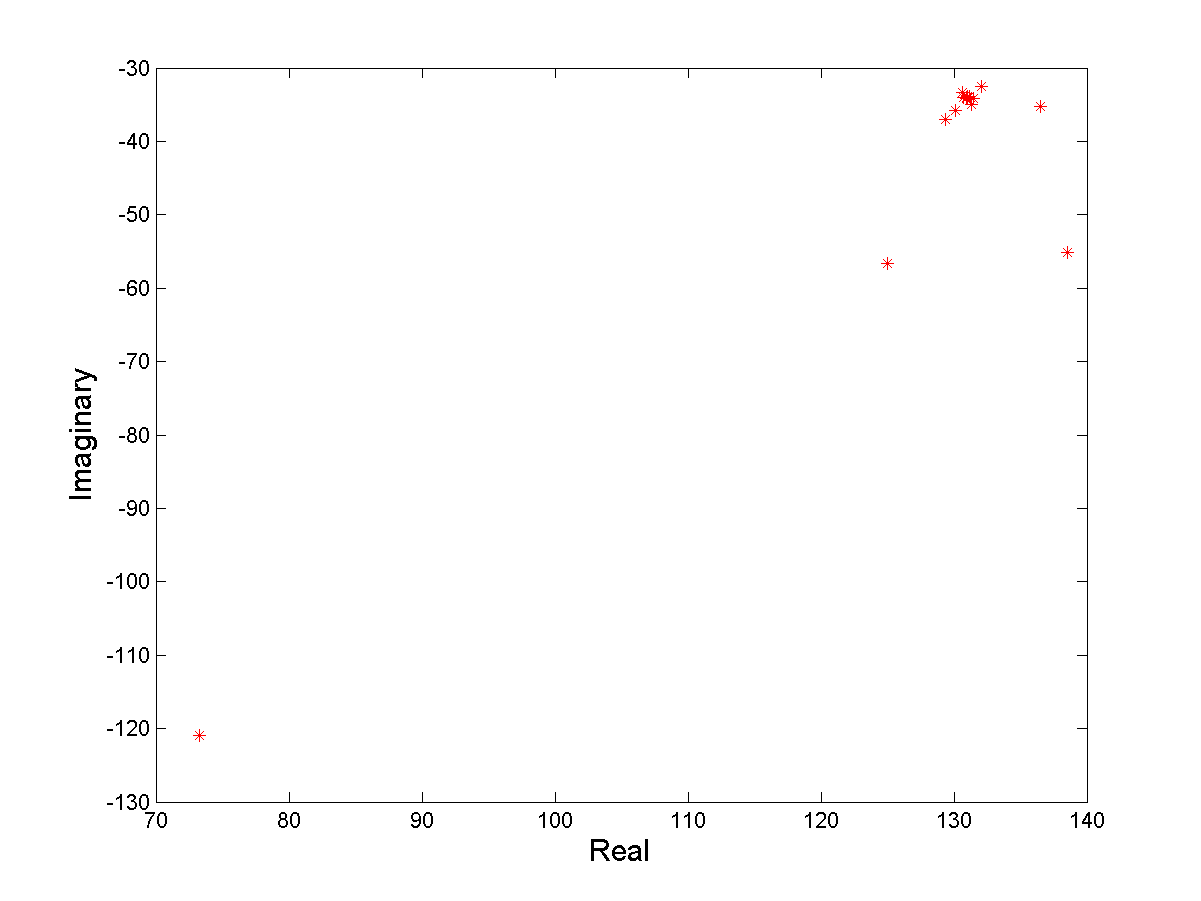}
      \includegraphics [scale=1.45]{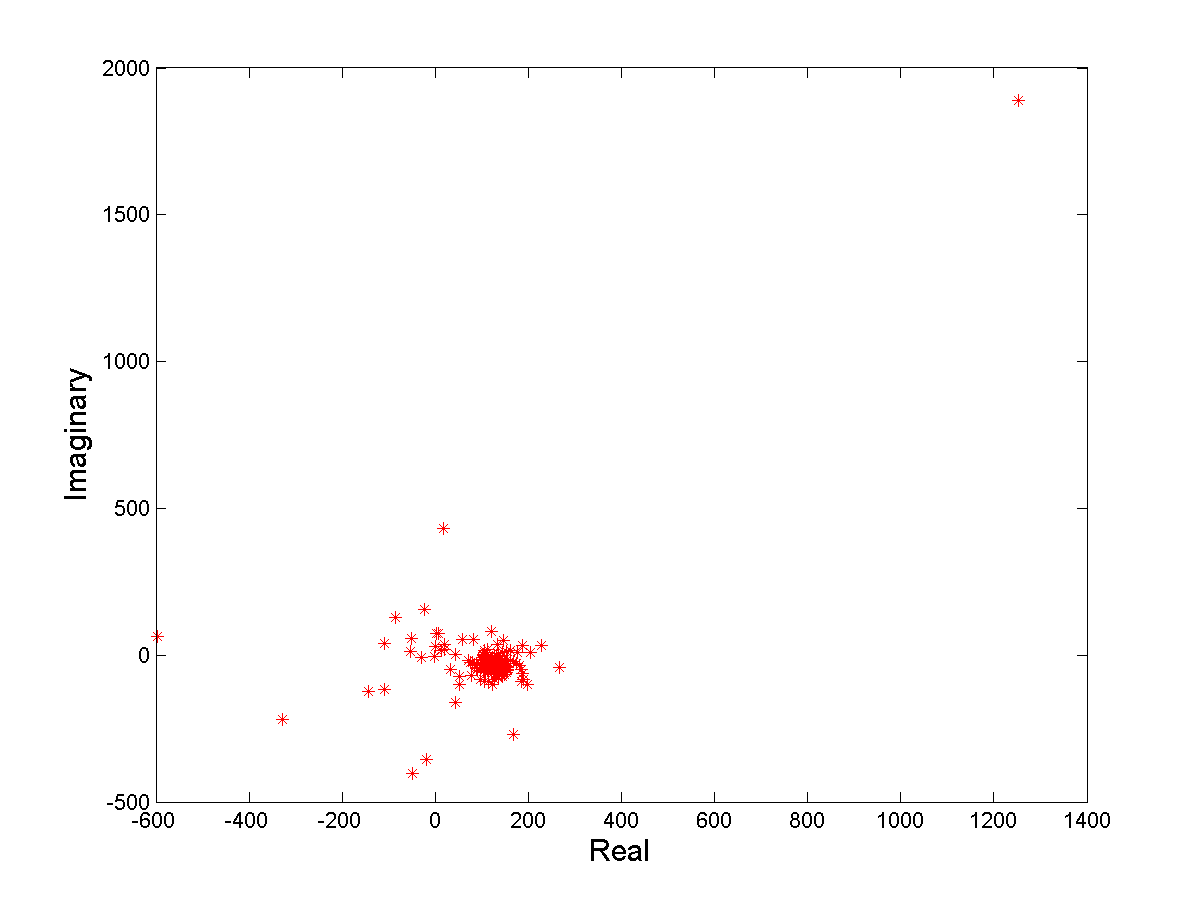}\\
      \includegraphics [scale=2.0]{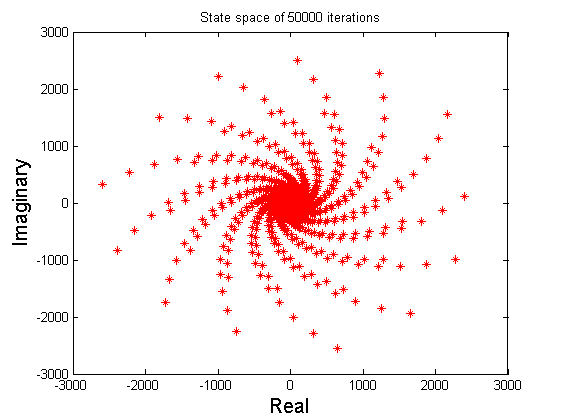}
      \includegraphics [scale=2.0]{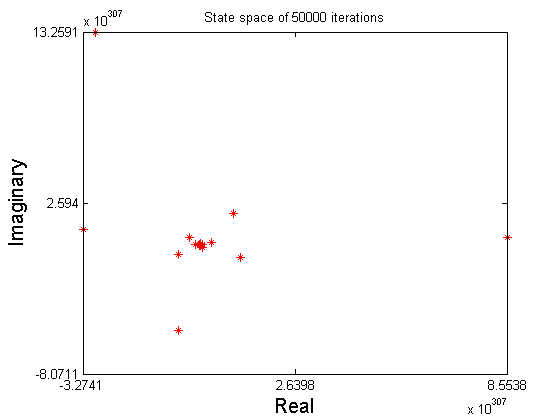}
      \includegraphics [scale=2.0]{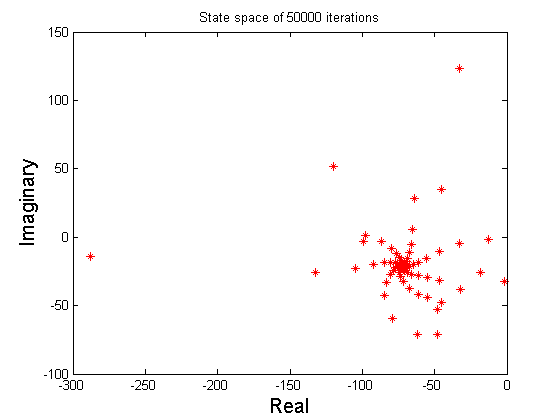}\\
      \includegraphics [scale=2.0]{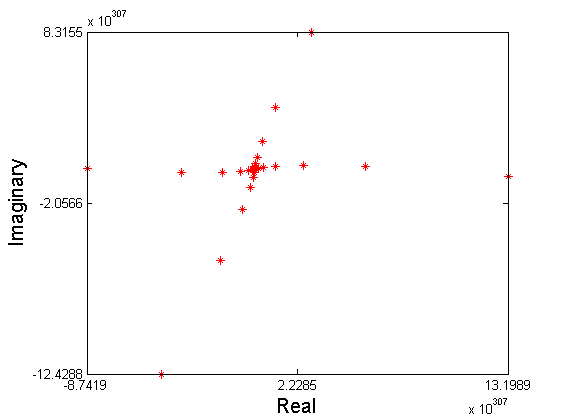}
      \includegraphics [scale=2.0]{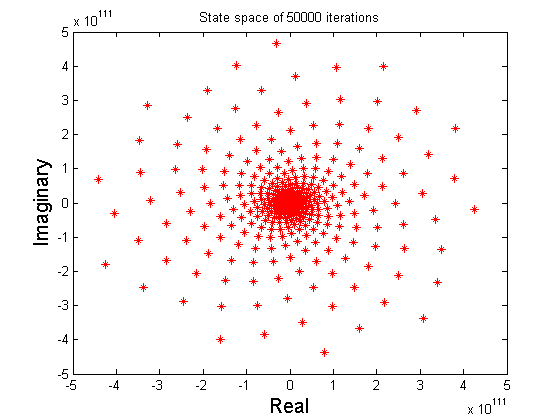}
      \includegraphics [scale=2.0]{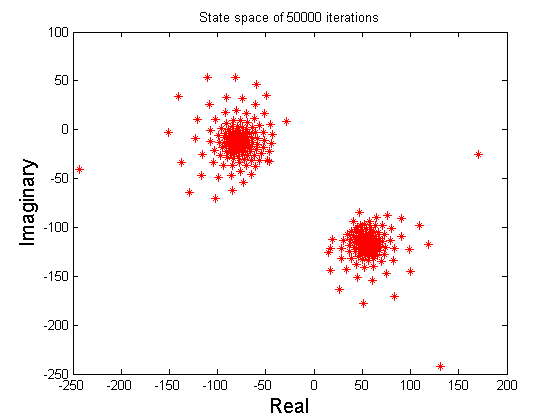}\\
      \includegraphics [scale=2.0]{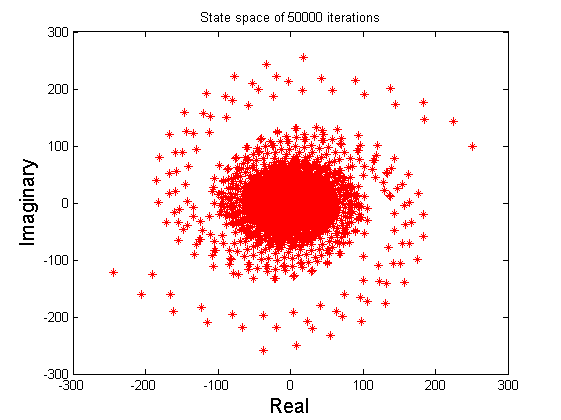}
      \includegraphics [scale=2.0]{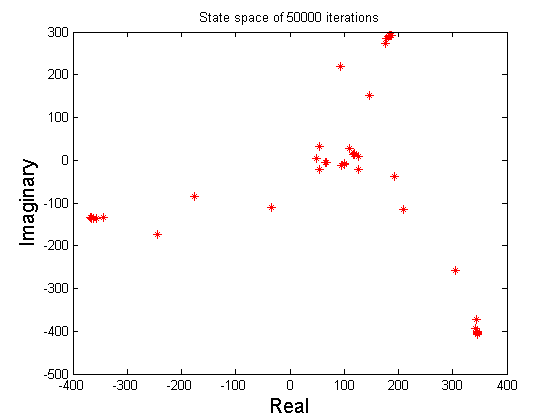}
      \includegraphics [scale=2.0]{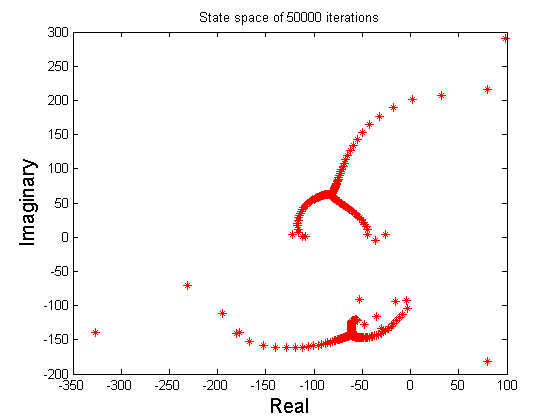}\\
      \includegraphics [scale=2.0]{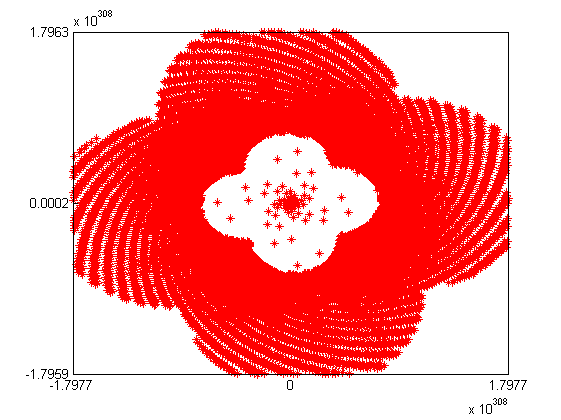}
      \includegraphics [scale=2.0]{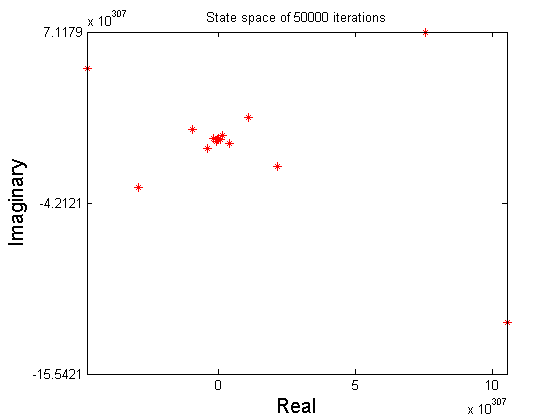}
      \includegraphics [scale=2.0]{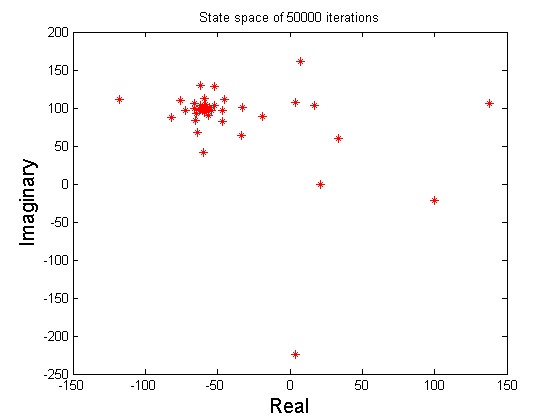}\\
      \includegraphics [scale=1.45]{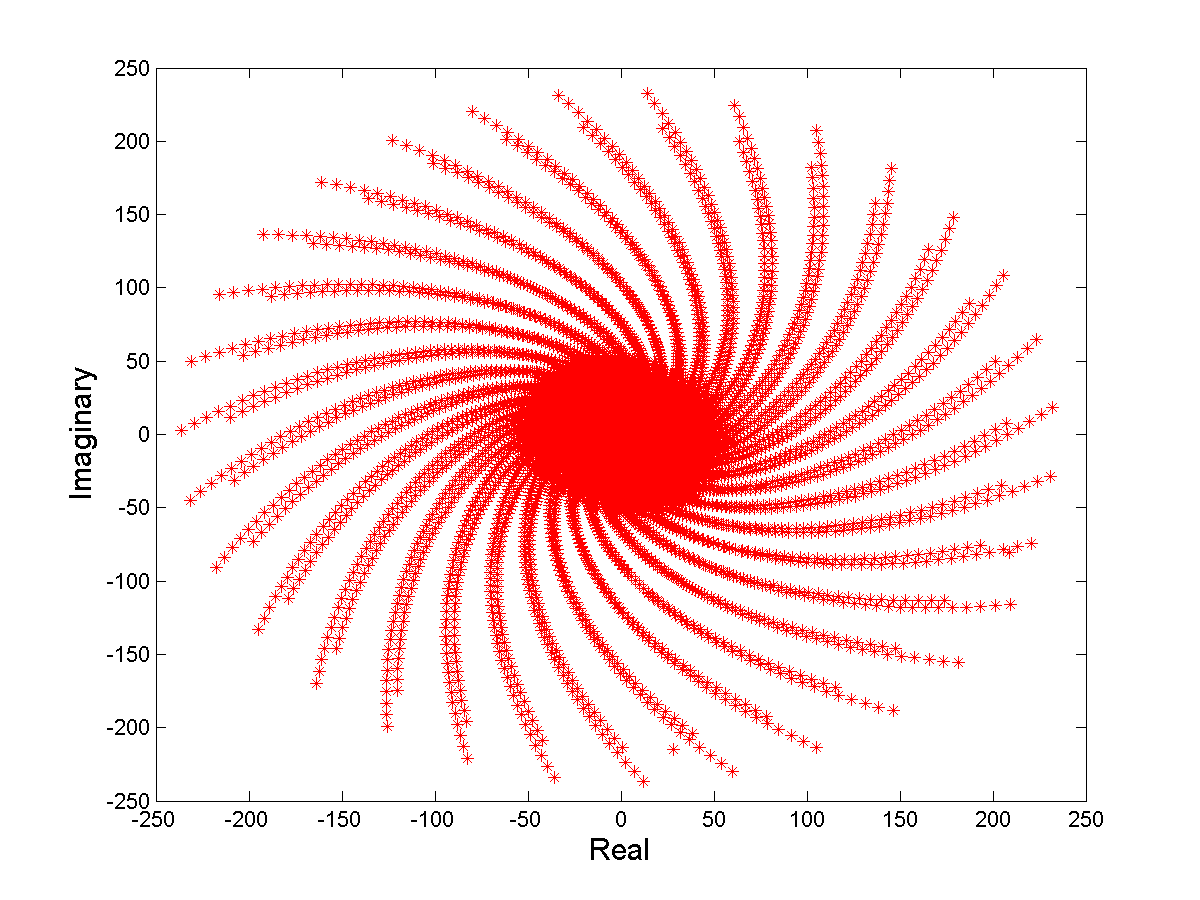}
      \includegraphics [scale=1.45]{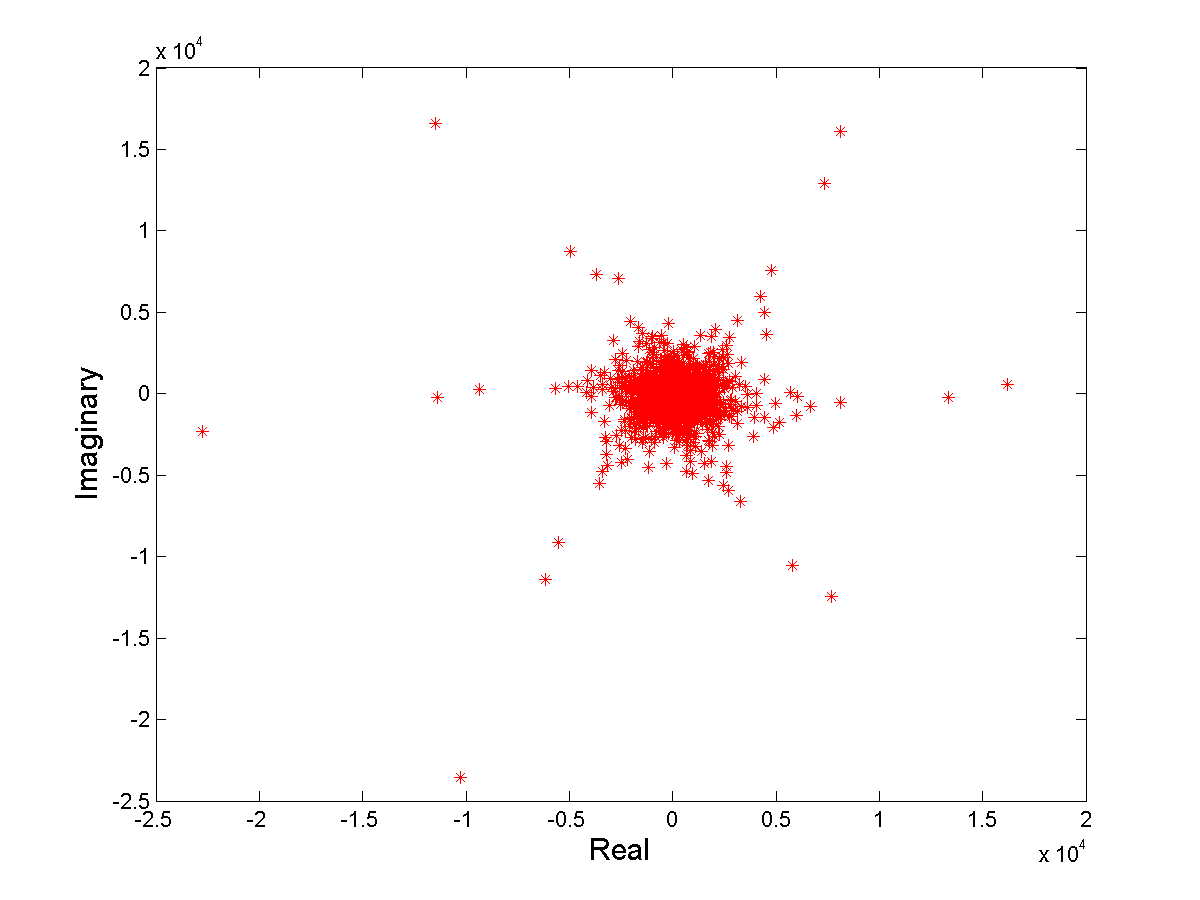}
      \includegraphics [scale=1.45]{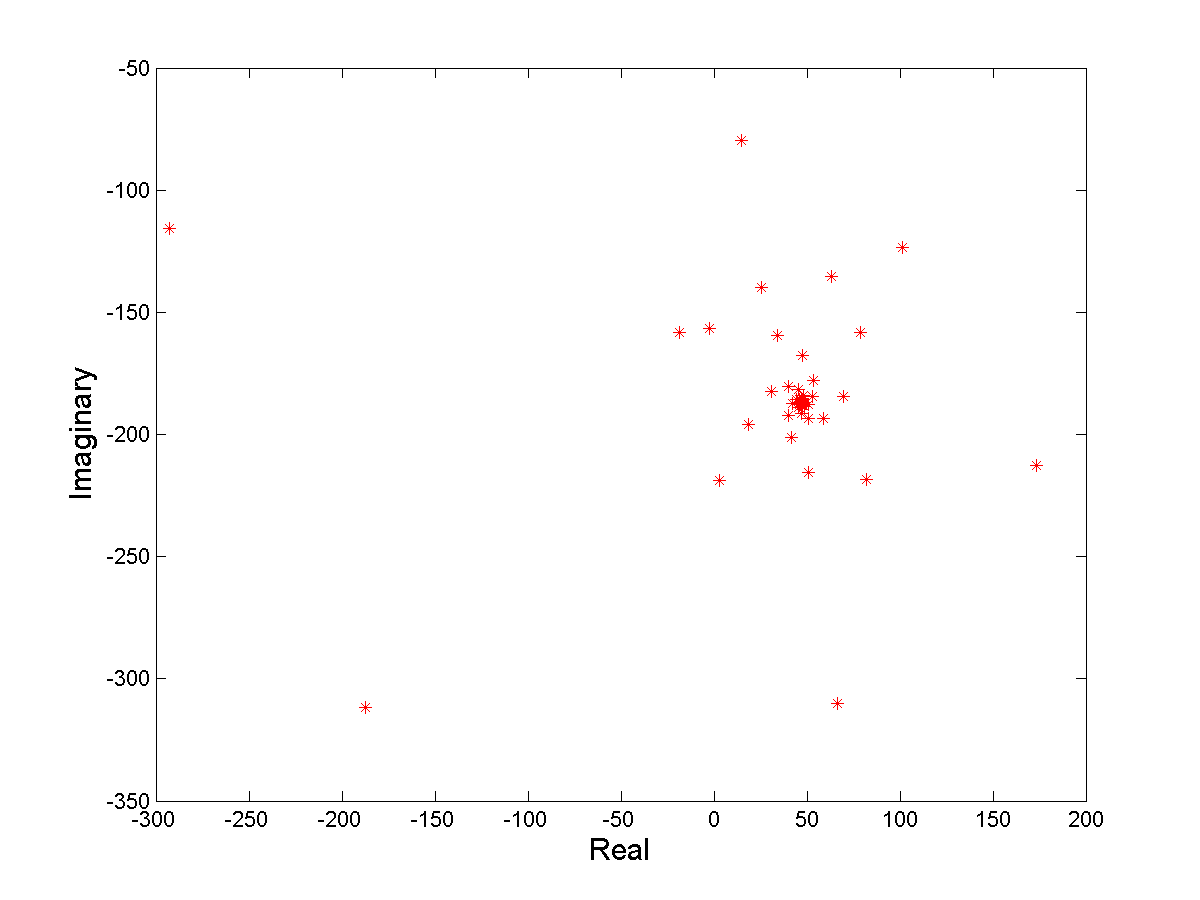}\\
      \includegraphics [scale=1.45]{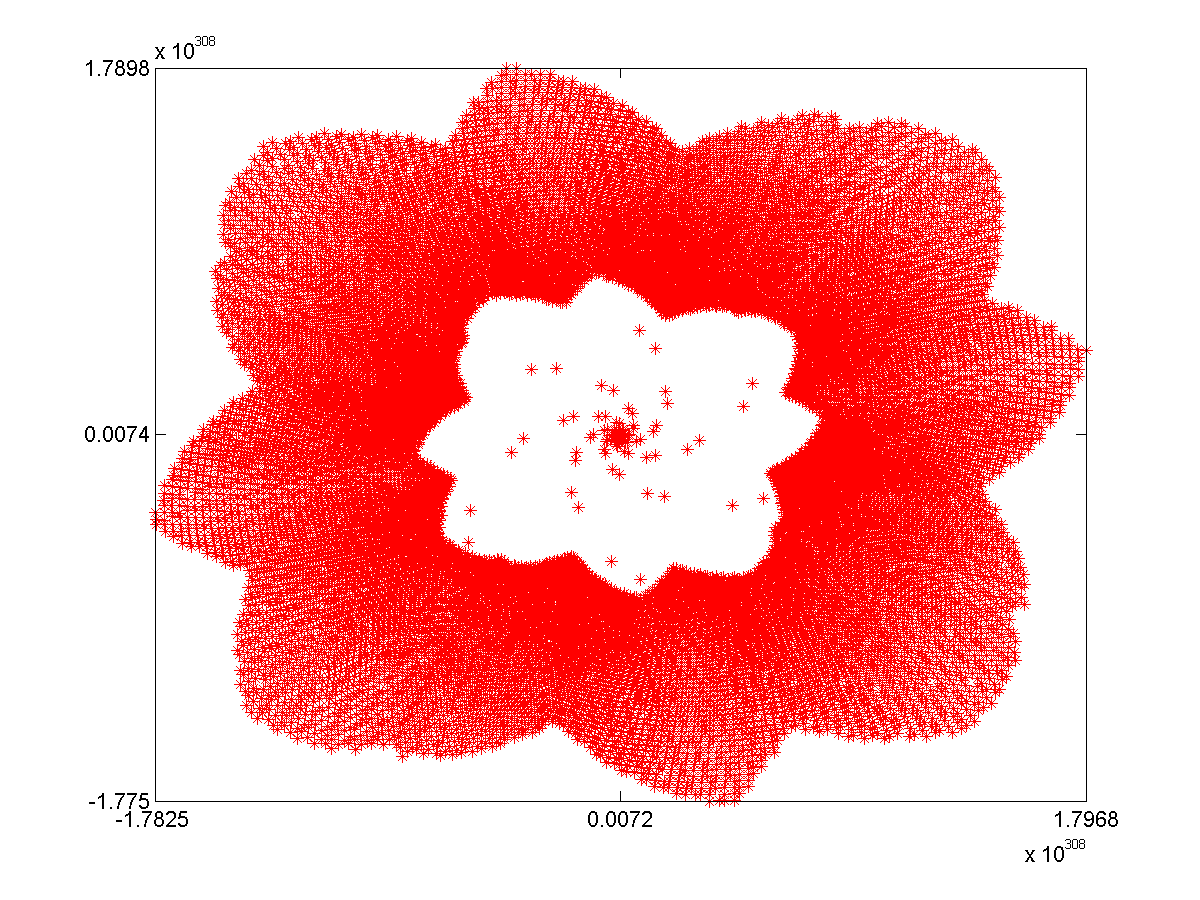}
      \includegraphics [scale=1.45]{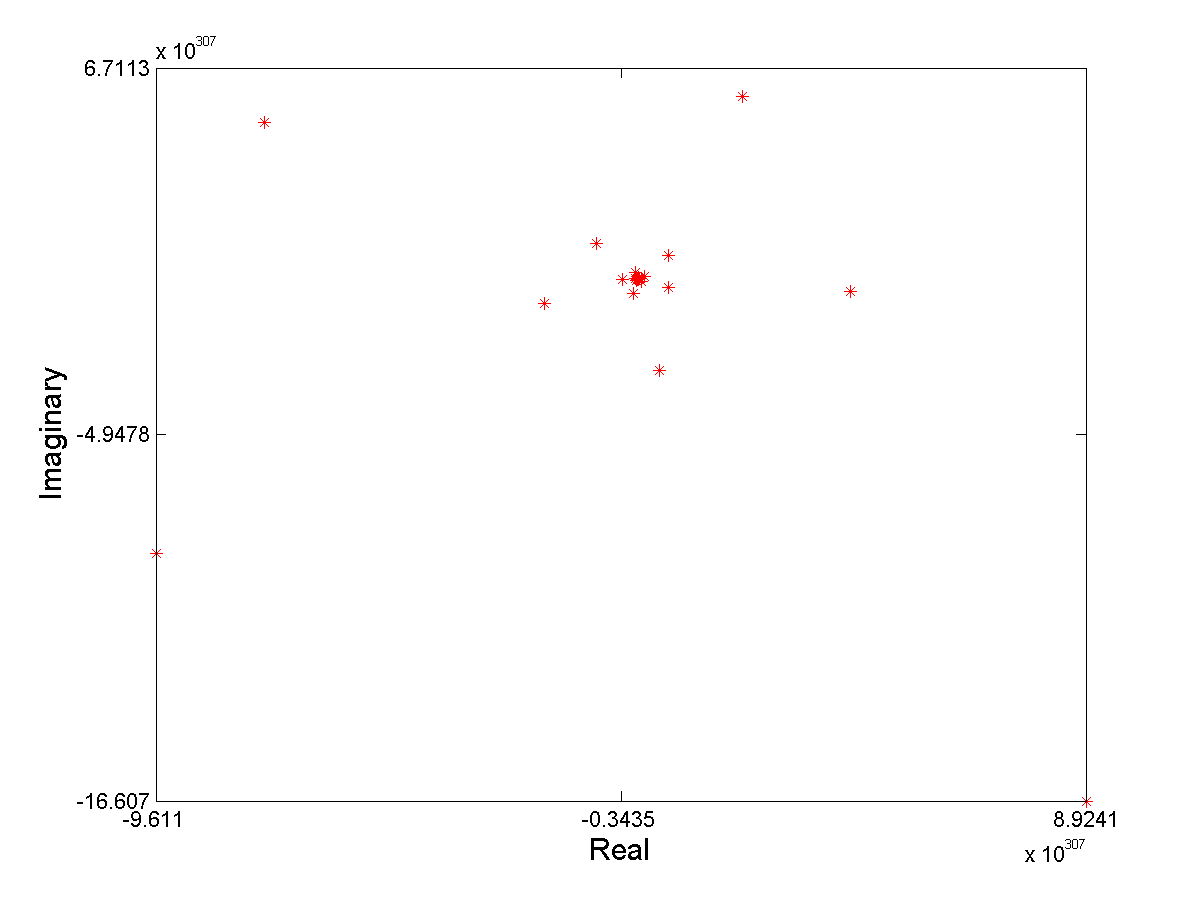}
      \includegraphics [scale=1.45]{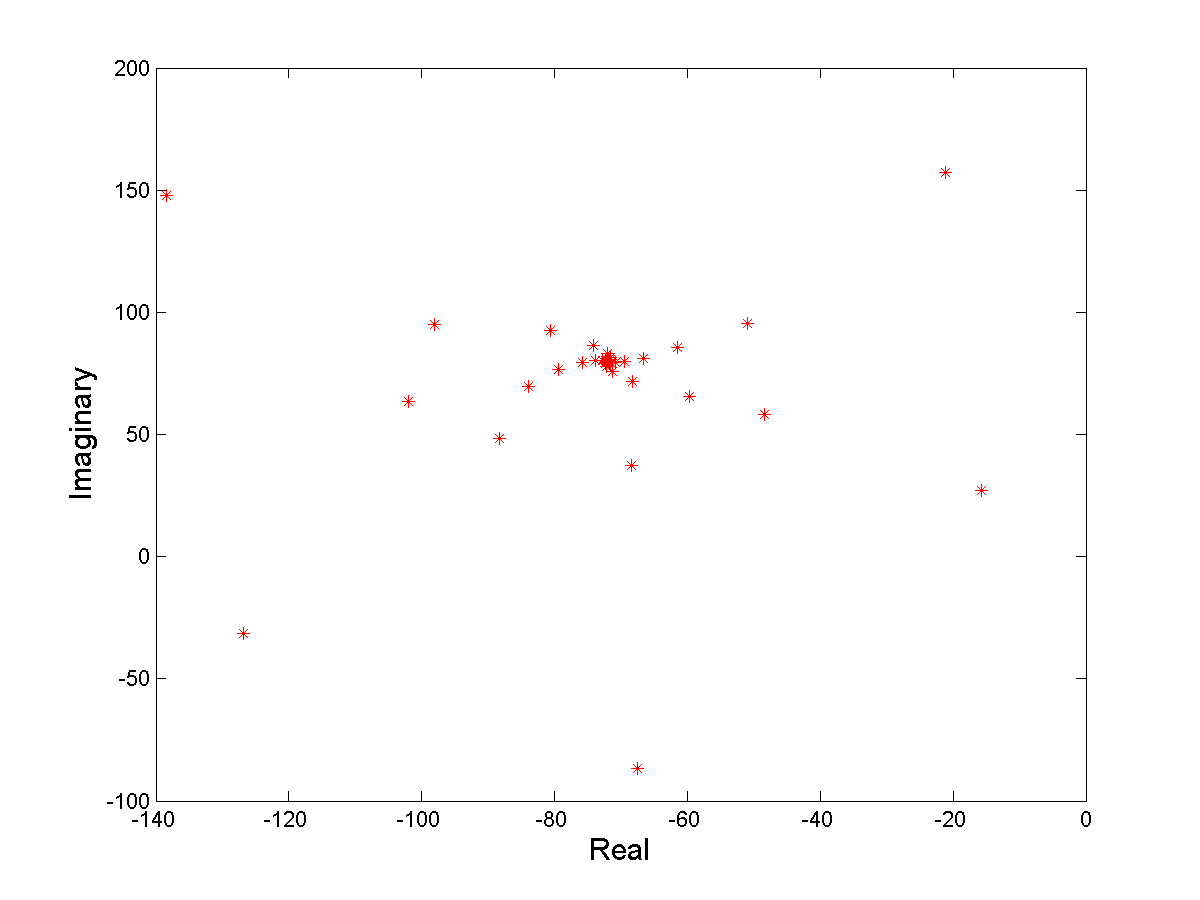}

      \end{tabular}
      }
\caption{States space of $50000$ iterations of the Difference equations Eq. $(1)$, Eq. $(8)$ and Eq. $(9)$. Each row of the figures denotes each different case as states in the Table $(9)$ respectively.}
      \begin{center}

      \end{center}
      \end{figure}
\noindent
In the case of $3$rd, $5$th and $7$th cases of the Table $(9)$, the solutions of the difference equation Eq. (1) are turned out to be fractal. The complex sequence plot of the different number of iterations are given in the in Fig. $(4)$ for the $3$rd and $5$th cases.

\begin{figure}[H]
      \centering

      \resizebox{16cm}{!}
      {
      \begin{tabular}{c c c}
      \includegraphics [scale=2.0]{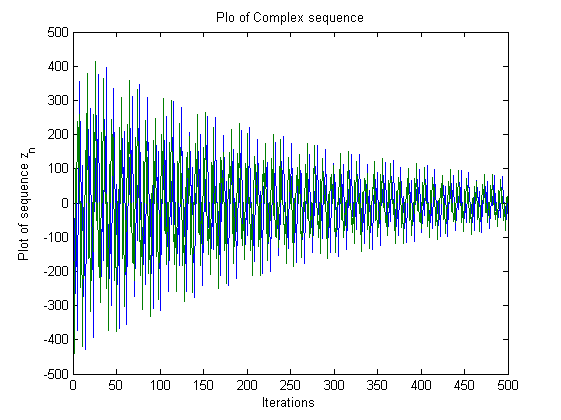}
      \includegraphics [scale=2.0]{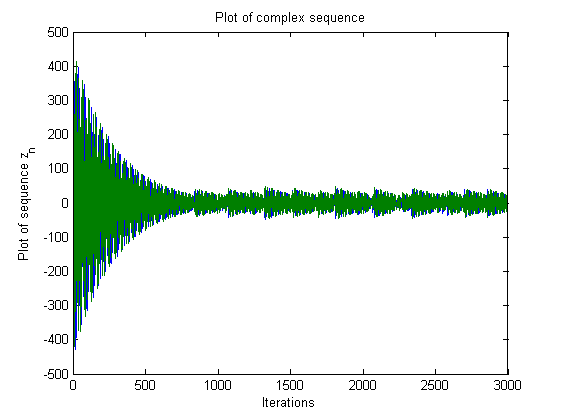}
      \includegraphics [scale=2.0]{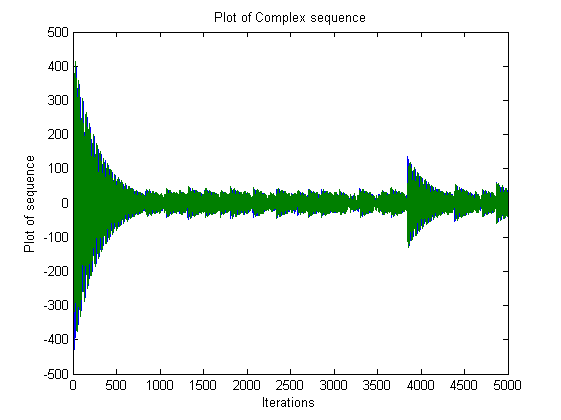}\\
      \includegraphics [scale=2.0]{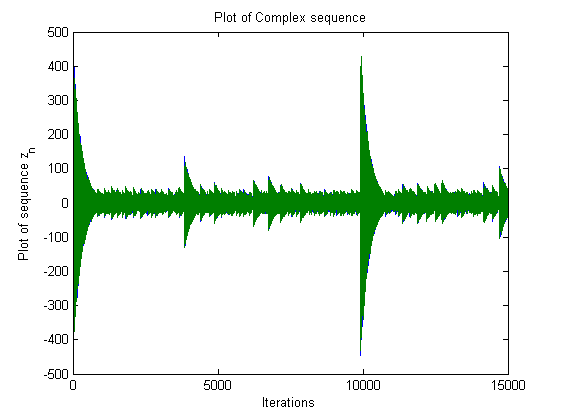}
      \includegraphics [scale=2.0]{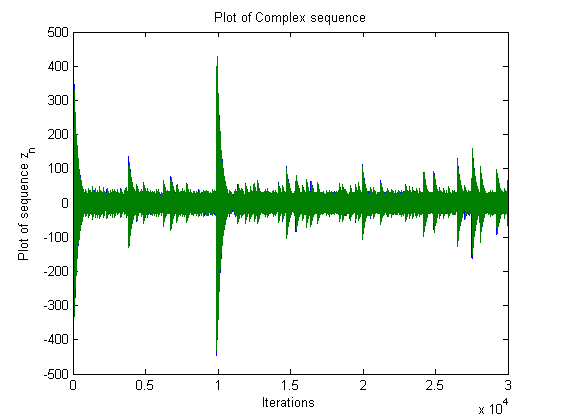}
      \includegraphics [scale=2.0]{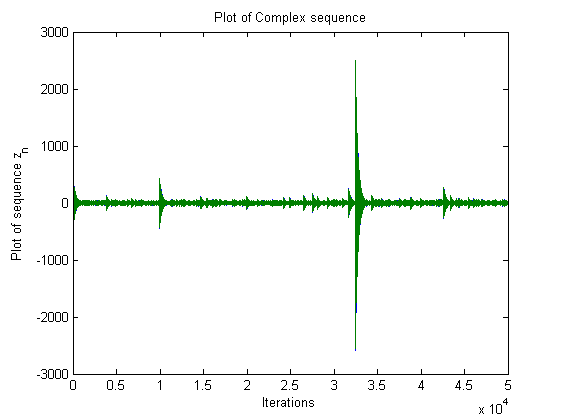}\\
      \includegraphics [scale=2.0]{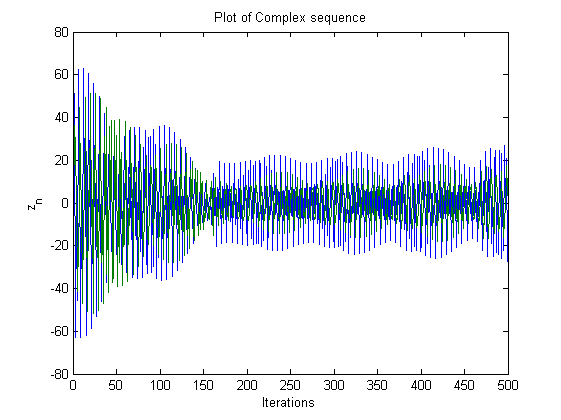}
      \includegraphics [scale=2.0]{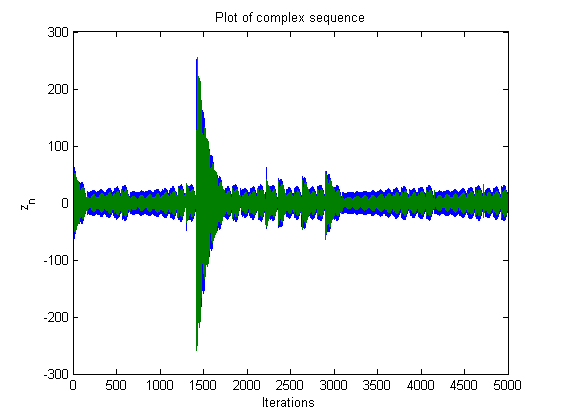}
      \includegraphics [scale=2.0]{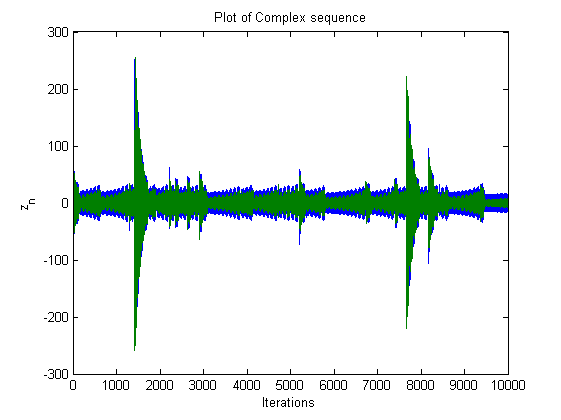}\\
      \includegraphics [scale=2.0]{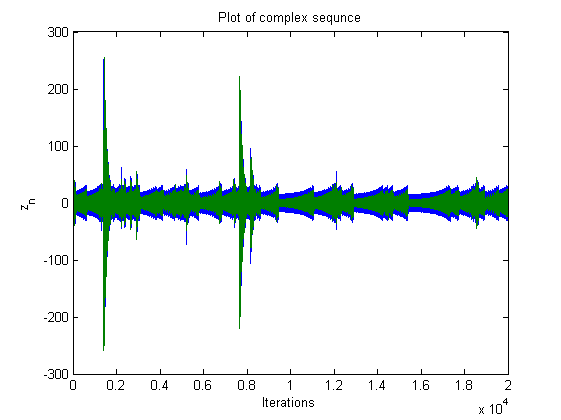}
      \includegraphics [scale=2.0]{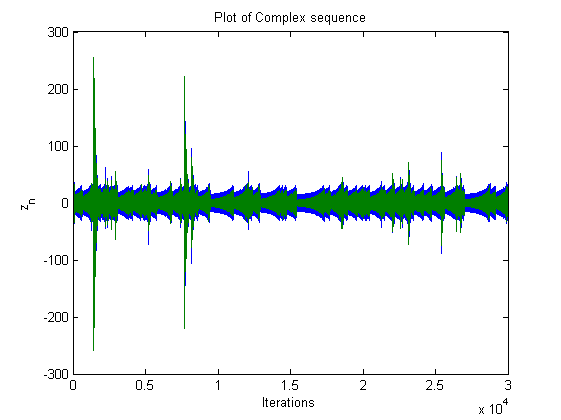}
      \includegraphics [scale=2.0]{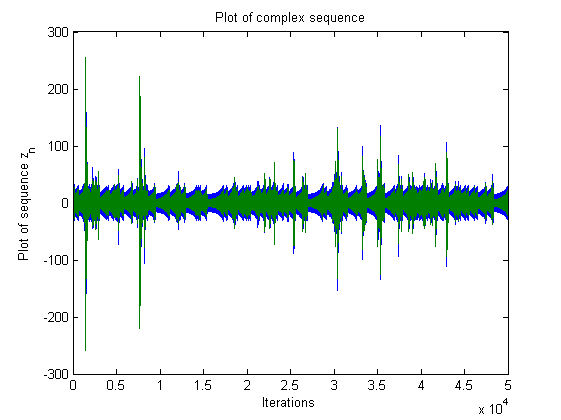}\\

      \end{tabular}
      }

      \begin{center}
\caption{Plot of Complex sequences which turns out to be a fractal over different number of iterations; First two rows and next consecutive two rows of figures for $3$rd and $5$th cases respectively}
      \end{center}
      \end{figure}
\noindent
The fractal dimension of the fractal solution for the $3$rd, $5$th and $7$th cases are $1.82779$, $1.89333$ and $1.9127$ respectively. This fractal dimension have been calculated through the software \texttt{$Benoit^{TM}$}.
\section{Conclusion}

\label{section:conclusion}

Nonlinear difference equations are an established dynamic area of research in both real and complex domain, but the study of rational difference equations is still in its infancy. The first order Riccati difference equation, for which the solution could be obtained explicitly, has been studied both in the real and complex domains, see \cite{G-K-S} and \cite{K-L}. Second and third order rational difference equations with positive real parameters and initial conditions, have also been studied extensively, see \cite{Ca-L}. Also see \cite{Ca-Ch-L-Q-1} and \cite{Ca-Ch-L-Q-2}.

\addvspace{\bigskipamount}

When we consider the parameters and initial conditions to be complex, there is hardly anything known about rational difference equations. In the present manuscript, we took three closely related difference equations and did lot of computational study in understanding their dynamics characteristics. It is turned out as expected the richness and complexity of the dynamics in terms on chaotic, fractal, unbounded solutions where in the case of real parameters and real initial values nothing were found for the same difference equations. Our main purpose behind this manuscript is to probe and analyze how the dynamics changes for rational equations when we shift from positive real domain to the complex domain.

\addvspace{\bigskipamount}

We have carried out some local stability and global periodicity analysis. We have observed computationally the different characteristics of boundedness, unboundedness, convergence, convergence to periodic solutions, fractal and chaos. Each one of these characteristics of the dynamics is important in their own right and we have posed them as challenging open problems which require further theoretical investigations. Further, certain peculiar behavior of the dynamics in the complex domain has been brought forward which is in stark contrast to the positive real scenario.

\addvspace{\bigskipamount}

The work carried out is generic in nature and can be helpful towards the study of other rational difference equations of second and higher orders. It is our believe, that this work would initiate the study and understanding of rational difference equations in the complex plane.

\section*{Acknowledgement}
The author SSH thanks \emph{Karthik, Kasi, Arita} of ICTS, TIFR and \emph{Dr. Esha Chatterjee} of IISc for discussions and suggestions in many stages of the present research.


\end{document}